\newtheorem{thm}{Theorem}
\newtheorem{lem}[thm]{Lemma}
\newtheorem{cor}[thm]{Corollary}
\theoremstyle{definition}
\newtheorem{df}[thm]{Definition}
\newtheorem{rem}[thm]{Remark}
\newtheorem{con}[thm]{Conjecture}
\newtheorem{problem}[thm]{Problem}
\theoremstyle{remark}
\email{artur.bartoszewicz@p.lodz.pl}
\email{malfil@math.uni.lodz.pl}
\email{szymon.glab@p.lodz.pl}
\email{wisniows@univ.szczecin.pl}
\thanks{The last author has been supported by the NCN Grant (``Diamond grant") No. 0168/DIA/2014/43.}
\email {jswaczyna@wp.pl}
\subjclass[2010]{40A05, 11B05, 28A75, 28A80} 
\keywords{Cantorval, achievement set, attractor of IFS}
\begin{document}
\title{On generating regular Cantorvals connected with geometric Cantor sets}
\date{\today }
\author[A. Bartoszewicz]{Artur Bartoszewicz}
\address{Institute of Mathematics, \L \'od\'z University of Technology, ul.
W\'olcza\'nska 215, 93-005 \L \'od\'z, Poland}
\author[M. Filipczak]{Ma\l gorzata Filipczak}
\address{Faculty of Mathematics and Computer Science, \L \'od\'z University,
ul. Stefana Banacha 22, 90-238 \L \'od\'z, Poland}
\author[S. G\l \c{a}b]{Szymon G\l \c{a}b}
\address{Institute of Mathematics, \L \'od\'z University of Technology, ul.
W\'olcza\'nska 215, 93-005 \L \'od\'z, Poland}
\author[F. Prus-Wi\'sniowski]{Franciszek Prus-Wi\'sniowski}
\address{Institute of Mathematics, University of Szczecin, ul. Wielkopolska
15, 70-453 Szczecin, Poland}
\author[J. Swaczyna]{Jaros\l aw Swaczyna}
\address{Institute of Mathematics, \L \'od\'z University of Technology, ul.
W\'olcza\'nska 215, 93-005 \L \'od\'z, Poland}
\date{}

\begin{abstract}
We show that the Cantorvals connected with the geometric Cantor sets are not
achievement sets of any series. However many of them are attractors of IFS
consisting of affine functions.
\end{abstract}

\maketitle

\section{Introduction}

Let $(x_{n})=(x_{1},x_{2},\ldots )$ be an absolutely summable sequence, in
symbols $(x_{n})\in \ell _{1}$, and let 
\begin{equation*}
E(x_{n})=\left\{ \sum_{n=1}^{\infty }\varepsilon _{n}x_{n}:(\varepsilon
_{n})\in \{0,1\}^{\mathbb{N}}\right\}
\end{equation*}%
denote the set of all subsums of the series $\sum_{n=1}^{\infty }x_{n}$. The
set $E(x_{n})$ is called the \emph{achievement set of $(x_{n})$} (see \cite%
{J}). It is easy to see that for $(x_{n})=(\frac{2}{3},\frac{2}{3^{2}}%
,\ldots )$ the set $E(x_{n})$ is equal to the classic Cantor ternary set $C$
and for $(x_{n})=(\frac{1}{2},\frac{1}{2^{2}},\ldots )$ we have $%
E(x_{n})=[0,1]$. Achievement sets of sequences have been considered by many
authors, some results have been rediscovered several times. The following
properties of sets $E(x_{n})$ were described by Kakeya in \cite{K} in 1914:

\begin{itemize}
\item[(i)] $E(x_n)$ is a compact perfect set,

\item[(ii)] If $|x_{n}|>\sum_{i>n}|x_{i}|$ for all sufficiently large $n$'s,
then $E(x_{n})$ is homeomorphic to the Cantor set $C$,

\item[(iii)] If $|x_{n}|\leq \sum_{i>n}|x_{i}|$ for all sufficiently large $%
n $'s, then $E(x_{n})$ is a finite union of closed intervals. Moreover, if $%
|x_{n}|\geq |x_{n+1}|$ for all but finitely many $n$'s and $E(x_{n})$ is a
finite union of closed intervals, then $|x_{n}|\leq \sum_{i>n}|x_{i}|$ for
all but finitely many $n$'s.
\end{itemize}

One can see that $E(x_{n})$ is finite if and only if $x_{n}=0$ for all but
finite number of $n$'s, i.e. $(x_{n})\in c_{00}$. Kakeya conjectured that if 
$(x_{n})\in \ell _{1}\setminus c_{00}$, then $E(a_{n})$ is always
homeomorphic to the Cantor set $C$ or it is a finite union of intervals. In
1980 Weinstein and Shapiro in \cite{WS} gave an example whose showed that
the Kakeya hypothesis is false. Independently, the similar example was given
by Ferens in \cite{F}. In \cite{GN} Guthrie and Nymann gave a very simple
example of a sequence whose achievement set is not a finite union of closed
intervals but it has nonempty interior. They used the following sequence $%
(t_{n})=(\frac{3}{4},\frac{2}{4},\frac{3}{16},\frac{2}{16},\ldots )$.
Moreover, they formulated the following:

\begin{thm}
\label{char} For any $(x_n) \in \ell_1 \setminus c_{00}$, the set $E(x_n)$
is one of the following types:

\begin{itemize}
\item[(i)] a finite union of closed intervals,

\item[(ii)] homeomorphic to the Cantor set $C$,

\item[(iii)] homeomorphic to the set $\mathbf{T}=E(t_n)=E(\frac{3}{4}, \frac{%
2}{4}, \frac{3}{16}, \frac{2}{16}, \frac{3}{64}, \ldots)$.
\end{itemize}
\end{thm}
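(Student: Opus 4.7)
The plan is to combine Kakeya's dichotomy (properties (ii) and (iii) in the Introduction) with a topological classification of \emph{regular Cantorvals}. Since $E(x_n)$ equals $E(|x_n|)$ up to a translation and is invariant under rearrangement of the summands, we may assume $x_1\geq x_2\geq\cdots>0$. If $(x_n)\in\ell_1\setminus c_{00}$ and $E(x_n)$ is neither a finite union of closed intervals nor homeomorphic to the Cantor set, then (ii) forces the inequality $x_n>\sum_{i>n}x_i$ to fail infinitely often, while (iii) forces it to hold infinitely often. Thus we are in a genuinely mixed regime in which both ``Cantor-type'' and ``interval-type'' indices recur.

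Next I would extract the Cantorval structure from the self-similar decomposition
\[
E(x_n)=\bigcup_{(\varepsilon_1,\ldots,\varepsilon_n)\in\{0,1\}^n}\Bigl(\sum_{i=1}^n\varepsilon_i x_i+E_n\Bigr),\qquad E_n:=E\bigl((x_i)_{i>n}\bigr),
\]
in which the $2^n$ translated copies of $E_n$ have diameter at most $\sum_{i>n}x_i\to 0$; adjacent copies overlap at interval-type indices and are separated by a gap at Cantor-type indices. Since $E(x_n)$ is compact and perfect (Kakeya (i)) and not Cantor-like, Brouwer's characterization of the Cantor set forces a nondegenerate component to exist; this component propagates through every translated copy of $E_n$ and, together with the fact that copy diameters shrink to $0$, yields that the interior is dense in $E(x_n)$. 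The infinitely many Cantor-type indices then produce infinitely many gaps that accumulate onto every component endpoint. A routine verification now yields the defining properties of a regular Cantorval: compact, perfect, equal to the closure of its interior, and with each endpoint of every nondegenerate component a two-sided accumulation point of other components.

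The final step is the topological classification: any two regular Cantorvals in $\mathbb{R}$ are homeomorphic. One proves this by enumerating the countably many maximal nondegenerate components of each set in the natural order of $\mathbb{R}$, matching them bijectively and order-preservingly via affine identifications with $[0,1]$, and extending the matching across the residual compact, perfect, totally disconnected ``Cantor residue,'' which is unique up to homeomorphism by Brouwer's theorem. Since the Guthrie--Nymann sequence $(t_n)$ has a $2$-periodic pattern consisting of one Cantor-type and one interval-type index, the set $\mathbf{T}=E(t_n)$ is itself verified directly to be a regular Cantorval, and therefore $E(x_n)\cong\mathbf{T}$.

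The hardest part will be step three: the bijection between the ordered sequences of components of $E(x_n)$ and of $\mathbf{T}$ must simultaneously respect the accumulation pattern, so that continuity at each boundary point follows from the density of components on both sides. Producing this bijection requires careful bookkeeping at cluster points where an interval of one Cantorval is approached by a Cantor-piece on one side and another interval on the other; the mechanical properties (compactness, perfectness, density of interior) in step two are immediate from Kakeya and the self-similar decomposition, but the order-preserving, accumulation-respecting matching is the genuine obstacle.
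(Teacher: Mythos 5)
First, a point of order: the paper does not prove Theorem \ref{char} at all --- it is quoted from Guthrie--Nymann, with the correct proof attributed to \cite{NS0} --- so I am comparing your proposal against that cited proof. Your overall architecture is the right one and matches it in outline: show that when $E(x_n)$ is neither nowhere dense (hence a Cantor set by Brouwer) nor a finite union of closed intervals it must be an $M$-Cantorval, and then invoke the topological uniqueness of such sets. Your Baire-category observation is also sound: if $E$ contains an interval, then since $E=\bigcup_{f\in F_n}(f+E_n)$ is a finite union of closed translates, some $E_n$ contains an interval for every $n$, and since $\operatorname{diam}E_n=r_n\to 0$ the interior of $E$ is dense in $E$. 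One smaller caveat: the ``mixed regime'' of Kakeya indices does not determine the topology (the Kakeya conditions are sufficient, not necessary, and there are mixed-regime sequences with nowhere dense $E$), so the case split has to be made on the topology of $E$ itself, not on the indices.

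The genuine gap is your second step, and it sits exactly where the original Guthrie--Nymann argument broke down. The properties you actually establish --- compact, perfect, equal to the closure of its interior, not a finite union of intervals --- do not characterize a Cantorval: the set $[0,1]\cup(2+\mathbf{T})$ has all of them and is not homeomorphic to $\mathbf{T}$, because the endpoints of the component $[0,1]$ are not accumulation points of gaps or of trivial components. Hence the sentence ``the infinitely many Cantor-type indices then produce infinitely many gaps that accumulate onto every component endpoint'' is the entire content of the theorem, and it is not a routine verification. The First Gap Lemma does produce, for every $n$, gaps of the tail set $E_n$; but in the decomposition $E=\bigcup_{f\in F_n}(f+E_n)$ the finitely many translated copies of $E_n$ overlap, and a gap of one copy can be entirely filled by another copy. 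What must be excluded is the possibility that near some point of $E$ all small-scale gaps are filled in this way, so that $E$ is locally a finite union of intervals there while being Cantorval-like elsewhere. Excluding this requires a precise description of which gaps survive in $E$; that is Nymann--Saenz's Lemma 4, reproduced in the present paper as the Second Gap Lemma (Lemma \ref{second_gap_lemma}), which pins down every gap $(a,b)$ of $E$ as $b=f_j^{(p)}$, $a=f_{j-1}^{(p)}+r_p$ with $p=\max\{n:x_n\ge b-a\}$, and thereby lets one propagate gaps and nontrivial components to every scale near every point. Without an ingredient of this kind your step two does not close. By contrast, your step three --- the order-preserving back-and-forth between any two $M$-Cantorvals --- is a known, essentially combinatorial fact once the correct accumulation conditions are available, and is not where the real difficulty of the theorem lies.
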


Although their proof had a gap, the theorem is true and the correct proof
was given by Nymann and Saenz in \cite{NS0}. Guthrie, Nymann and Saenz have
observed that the set $\mathbf{T}$ is homeomorphic to the set $N$ described
by the formula 
\begin{equation*}
N=[0,1]\setminus \bigcup_{n\in \mathbb{N}}U_{2n},
\end{equation*}%
where $U_{n}$ denotes the union of $2^{n-1}$ open middle thirds which are
removed from the interval $[0,1]$ at the $n$-th step in the construction of
the classic Cantor ternary set $C$. Such sets are called Cantorvals in the
literature (to emphasize the similarity to the interval and to the Cantor
set simultaneously). It is known that a Cantorval is such nonempty compact
set in $\mathbb{R}$, that it is the closure of its interior and both
endpoints of any nontrivial component are accumulation points of its trivial
components. Other topological characterizations of Cantorvals can be found
in \cite{BFPW} and \cite{MO}. Let us observe that Theorem \ref{char} states
that $\ell _{1}$ can be divided into four disjoint sets $c_{00},\ \mathcal{C}%
,\ \mathcal{J}$ and $\mathcal{MC}$, where $\mathcal{J}$ consists of
sequences with achievement sets being finite unions of intervals, $\mathcal{C%
}$ consists of sequences $(x_{n})$ with $E(x_{n})$ homeomorphic to the
Cantor set and $\mathcal{MC}$ consists of the sequences with achievement
sets homeomorphic to the set $\mathbf{T}$ or equivalently to $N$ ($\mathcal{%
MC}$ is the abbrevation of \emph{Middle-Cantorval} because the structure of
the achievement set should be symmetric). Algebraic and topological
properties of these subsets of $\ell _{1}$ have been recently studied in 
\cite{BBGS}. All known examples of sequences which achievement sets are
Cantorvals belong to the class of multigeometric sequences. This class was
deeply investigated in \cite{J}, \cite{BFS} and \cite{BBFS}. In particular,
the achievement sets of multigeometric series and sets obtained in more
general case are the attractors of the affine iterated function systems, see 
\cite{BBFS}. More information on achievement sets can be found in the
surveys \cite{BFPW}, \cite{N1} and \cite{N2}. The aim of our paper is to
show that the most known Cantorval $N=[0,1]\setminus \bigcup_{n\in \mathbb{N}%
}U_{2n}=C\cup \bigcup_{n\in \mathbb{N}}U_{2n-1}$, and the other members of
large class of Cantorvals connected with geometric Cantor sets are not
isometric to an achievement set for any sequence (section 2) but they are
attractors of iterated function system (IFS) consisting of several affine
functions (section 3).

It is almost obvious that any achievement set $E$ of a summable sequence
contains zero and is symmetric in the sense that there exists a number $t$
such that if $t-x\in E$ then $t+x\in E$ too. It is a natural question if
every set with these properties is an achievement set for some sequence.
This question was posted by W. Kubi\'{s} in \L \'{o}d\'{z} in 2015.In
particular, in \cite{BPW} the authors ask if the Cantorval $N$ is an
achievement set of any sequence. 

Recently, the authors of \cite{BPW} have considered the N-G-Cantorval $%
\mathbf{T}$ (defined in Theorem \ref{char} (iii)), the Cantor set $%
Y=\partial \mathbf{T}$ and the Cantorval $Z$ whose intervals are the gaps of 
$\mathbf{T}$, and the gaps of $Z$ are the intervals of $\mathbf{T}$. They
introduced the notion of center of distances and used it to prove that $Y$
and $Z$ are not achievement sets for any series. Recall that the center of
distances of a metric space $(X,d)$ is the set 
\begin{equation*}
C(X)=\{\alpha \geq 0:\forall x\in X\exists y\in X\text{ , such that }%
d(x,y)=\alpha \}.
\end{equation*}%
Clearly for $X=E(x_{n})$ for a sequence $(x_{n})$ of positive terms we have $%
\{x_{n}:n\in \mathbb{N}\}\subset C(X)$. If we assume that $N=E(x_{n})$, we
conclude by the Second Gap Lemma (Lemma \ref{second_gap_lemma} in the second
section) that $x_{n}=\frac{2}{9}$ for some $n\in \mathbb{N}$. But for $x=%
\frac{25}{54}$ we observe that $x\in N$\ and\ the set $\{y\in N:|x-y|=\frac{2%
}{9}\}=\emptyset $. Therefore $N$ is not an achievement set. We are not able
to use this method to prove our main theorem but the result itself has a
similar nature. Our result and that from \cite{BPW} suggest the following:

\begin{con}
Let $X$ be a Cantor set and $Y,Z$ be Cantorvals with $X=\partial Y=\partial
Z $. Then at most one set of $X,Y,Z$ can be an achievement set.
\end{con}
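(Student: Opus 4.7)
The plan is to argue by contradiction following the template used in the introduction for $N$: suppose two among $X$, $Y$, $Z$ are simultaneously achievement sets, pin down a specific value $\alpha>0$ which must appear in the center of distances of both, and then produce a point in one of the sets whose $\alpha$-neighbourhood misses the set. The basic tools are the inclusion $\{x_n\}\sbq C(E(x_n))$ valid for sequences of positive terms, together with a Gap Lemma which, given the gap pattern of an achievement set, extracts forced terms of the representing sequence.

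The first step is to fix a common combinatorial frame. Since $\partial Y=\partial Z=X$, the open set $[0,1]\setminus X$ decomposes into a countable family $\mathcal{F}$ of components, and each of $Y$ and $Z$ is determined by a subfamily $\mathcal{I}_Y,\mathcal{I}_Z\sbq\mathcal{F}$ of ``intervals'' so that $Y=X\cup\bigcup\mathcal{I}_Y$ and $Z=X\cup\bigcup\mathcal{I}_Z$; the remaining components of $\mathcal{F}$ are the gaps of $Y$ and of $Z$ respectively. In particular the gaps of $X$ are exactly the whole family $\mathcal{F}$, so the largest gap of $X$ dominates the largest gaps of $Y$ and $Z$. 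This bookkeeping allows one to read off the comparative sizes of the extremal gaps of the three sets and to isolate a canonical length $\alpha$, built as a simple combination of the lengths of the few largest members of $\mathcal{F}$, which the Gap Lemma would force into every representing sequence of $X$, $Y$ or $Z$.

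The second step is to handle the three pairs $(X,Y)$, $(X,Z)$ and $(Y,Z)$ separately, although by the symmetric role of $Y$ and $Z$ two of them should reduce to the third. In each case one assumes both sets are achievement sets, applies the Gap Lemma to both to conclude that the length $\alpha$ above must occur as a term of at least one of the representing sequences, and then constructs in that set a witness point $x$ whose two candidate partners $x\pm\alpha$ are forbidden by the opposite labellings of the components of $\mathcal{F}$ as ``intervals'' versus ``gaps'' by $Y$ and $Z$. The contradiction is that $\alpha\in C$ yet no partner exists, in the same spirit as the computation with $x=\frac{25}{54}$ and $\alpha=\frac{2}{9}$ used above to eliminate $N$.

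The main obstacle is the generality. Outside the self-similar or multigeometric regimes, neither the center of distances nor the precise gap pattern of an arbitrary Cantor set $X$ is under explicit control, and it is not clear how to locate the witness point without further structural hypotheses. A realistic intermediate target is therefore the class of geometric Cantor sets treated in the rest of this paper, where the gap pattern of $X$ and the admissible representing sequences of $Y$ and $Z$ are constrained by explicit multigeometric arithmetic; after settling this case, one would try to bootstrap to arbitrary $X$ by a self-similarity or compactness argument applied to the nested hierarchy of components of $\mathcal{F}$.
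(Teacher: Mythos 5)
The statement you are addressing is labelled a \emph{Conjecture} in the paper: the authors give no proof of it, and indeed they present it precisely because they do not know how to prove it. Your text is not a proof either; it is a strategy sketch, and you say so yourself in the final paragraph. The decisive gap is that the entire content of the conjecture lives exactly where your plan stops. You never identify the ``canonical length $\alpha$'' beyond asserting that it should be ``a simple combination of the lengths of the few largest members of $\mathcal{F}$,'' and you never construct the witness point $x$ except by analogy with the single computation $x=\frac{25}{54}$, $\alpha=\frac{2}{9}$ for $N=N_{1/3}$. For an arbitrary Cantor set $X$ with arbitrary subfamilies $\mathcal{I}_Y,\mathcal{I}_Z$ of its complementary intervals there is no reason such an $\alpha$ and such an $x$ exist, and finding them is the whole problem. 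Note also that the conjecture does not require $\mathcal{I}_Y$ and $\mathcal{I}_Z$ to be complementary (they could even coincide, making $Y=Z$), so the ``opposite labellings'' you invoke to forbid both partners $x\pm\alpha$ are not available in general.

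A further concrete difficulty: your central tool is the center-of-distances argument of Bielas, Plewik and Walczy\'{n}ska, but the authors state explicitly, immediately before the conjecture, that they \emph{were not able} to use this method even to prove their own Theorem \ref{mainResult} about the restricted class $N_q^H$ --- they had to develop the covering arguments of Lemmas \ref{lemat3} and \ref{MainLemma} instead. So the ``realistic intermediate target'' you name at the end (the geometric case) is precisely the paper's main theorem, and the route you propose for it is one the authors report does not go through. If you want to make progress on the conjecture, the more promising direction is to understand which parts of the proof of Theorem \ref{mainResult} (the First and Second Gap Lemmas, the strengthened Lemma \ref{lemat3} pinning down $b=x_k$ and $a=r_k$ for the leftmost longest gap, and the interval-covering estimates of Lemma \ref{MainLemma}) depend only on the relative positions of gaps and components shared by $X$, $Y$ and $Z$, rather than to hope that a single forced distance $\alpha$ can be extracted in general.
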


\section{Main results}

Let us assume that $(x_{n})$ is a nonincreasing, absolutely summable
sequence of positive real numbers. Denote (as in \cite{GN}, \cite{NS0}, \cite%
{BFPW}): 
\begin{equation*}
E=E(x_{n})=\left\{ \sum_{n=1}^{\infty }\varepsilon _{n}x_{n}:(\varepsilon
_{n})\in \{0,1\}^{\mathbb{N}}\right\} \text{;}
\end{equation*}%
\begin{equation*}
E_{k}=\left\{ \sum_{n=k+1}^{\infty }\varepsilon _{n}x_{n}\right\} \mbox{;}\,%
\text{\ }F_{k}=\left\{ \sum_{n=1}^{k}\varepsilon _{n}x_{n}\right\} \text{.}
\end{equation*}%
Of course we have 
\begin{equation*}
E=E_{k}+F_{k}=\bigcup_{f\in F_{k}}(f+E_{k})
\end{equation*}%
\begin{equation*}
E_{k}=E_{k+1}\cup (x_{k+1}+E_{k+1})
\end{equation*}%
for all $k\in \mathbb{N}$. Moreover, let $r_{k}:=\sum_{n=k+1}^{\infty }x_{n}$%
. By a gap in $E$ we understand any such interval $(a,b)$ that $a\in E,\
b\in E$ and $(a,b)\cap E=\emptyset $. The following two lemmas can be found
in \cite{BFPW}. We provide their proofs for reader's convenience. The first
is obvious.

\begin{lem}
(First Gap Lemma) If $x_{k}>r_{k}$ then $(r_{k},x_{k})$ is a gap in $E$.
\end{lem}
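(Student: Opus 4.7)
The plan is to show two things: that both endpoints $r_k$ and $x_k$ lie in $E$, and that no point of $E$ lies strictly between them.

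First, observe that $r_k \in E$ by choosing $\varepsilon_n = 0$ for $n \leq k$ and $\varepsilon_n = 1$ for $n > k$, and $x_k \in E$ by choosing $\varepsilon_k = 1$ and all other $\varepsilon_n = 0$. So both endpoints are achievable, and the hypothesis $x_k > r_k$ guarantees the interval $(r_k, x_k)$ is nondegenerate.

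Next, suppose for contradiction that some $s = \sum_{n=1}^{\infty} \varepsilon_n x_n \in E$ satisfies $r_k < s < x_k$. I would split into two cases based on whether any of the digits $\varepsilon_1, \ldots, \varepsilon_k$ equals $1$. If all of them are zero, then $s = \sum_{n > k} \varepsilon_n x_n \leq \sum_{n > k} x_n = r_k$, contradicting $s > r_k$. Otherwise, let $m \leq k$ be an index with $\varepsilon_m = 1$; then since the sequence $(x_n)$ is nonincreasing and positive, $s \geq x_m \geq x_k$, contradicting $s < x_k$.

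Both cases yield contradictions, so $(r_k, x_k) \cap E = \emptyset$, and combined with the first paragraph this means $(r_k, x_k)$ is a gap in $E$. There is no real obstacle here: the argument is a direct consequence of monotonicity of $(x_n)$ together with the elementary bound $\sum_{n > k} \varepsilon_n x_n \leq r_k$, which is exactly why the authors label the statement as obvious.
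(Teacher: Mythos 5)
Your proof is correct and is exactly the elementary argument the authors have in mind when they declare the First Gap Lemma ``obvious'' and omit its proof: both endpoints are realized by explicit choices of $(\varepsilon_n)$, and the case split on whether some $\varepsilon_m=1$ for $m\leq k$ forces any subsum either below $r_k$ or at least $x_k$ by monotonicity. Nothing further is needed.
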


The next observation is extracted from the proof of the crucial Lemma 4 of 
\cite{NS0}, where it was formulated as not a quite correct claim (however
the Lemma and the main result of \cite{NS0} are true).

\begin{lem}
\label{second_gap_lemma}(Second Gap Lemma) Let $(a,b)$ be a gap in $E$, and
let $p$ be defined by the formula $p:=\max \{n:x_{n}\geq b-a\}$. Then:

\begin{itemize}
\item[(i)] $b\in F_{p}$,

\item[(ii)] If $F_{p}=\{f_{1}^{(p)}<f_{2}^{(p)}<\ldots <f_{m(p)}^{(p)}\}$
and $b=f_{j}^{(p)}$, then $a=f_{j-1}^{(p)}+r_{p}$.
\end{itemize}
\end{lem}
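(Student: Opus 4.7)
The plan is to handle (i) and (ii) in sequence, leveraging the decomposition $E = \bigcup_{f \in F_p}(f + E_p)$ supplied in the setup. The key observation driving both parts is that by maximality of $p$ we have $x_n < b-a$ for every $n > p$; consequently, turning any $\varepsilon_n = 1$ with $n > p$ into a $0$ shifts a subsum by less than $b-a$.

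For (i), I take any representation $b = \sum_{n=1}^\infty \varepsilon_n x_n$. If some $\varepsilon_k = 1$ with $k > p$, then $b - x_k \in E$ and $a < b - x_k < b$, contradicting that $(a,b)$ is a gap. So every $\varepsilon_k$ with $k > p$ vanishes, which gives $b \in F_p$.

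Before starting (ii), I record an auxiliary fact that reuses the same swap trick: \emph{$E_p$ has no gap of size $\geq b - a$.} Indeed, if $(\alpha, \beta)$ were such a gap of $E_p$ with $\beta = \sum_{n > p} \delta_n x_n$, then any $\delta_k = 1$ would place $\beta - x_k$ inside $(\alpha, \beta) \cap E_p$, so all $\delta_k = 0$, forcing $\beta = 0$, absurd.

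For (ii), I establish $a = f_{j-1}^{(p)} + r_p$ by two matching bounds. The upper bound is quick: every $y \in E$ with $y < b$ decomposes as $y = f + e$ with $f \in F_p$ and $e \in E_p$; since $f \leq y < f_j^{(p)}$ we have $f \leq f_{j-1}^{(p)}$, and $e \leq r_p$, so $y \leq f_{j-1}^{(p)} + r_p$, and in particular $a \leq f_{j-1}^{(p)} + r_p$. For the lower bound, I note that $f_{j-1}^{(p)} + r_p \in E$, so it cannot lie in $(a,b)$; if $f_{j-1}^{(p)} + r_p < b$, then it is automatically $\leq a$. The remaining case $f_{j-1}^{(p)} + r_p \geq b$ is the main obstacle: the whole translate $f_{j-1}^{(p)} + E_p \subseteq E$ then straddles the gap, and setting $\alpha^* = \max\{e \in E_p : e \leq a - f_{j-1}^{(p)}\}$ together with $\beta^* = \min\{e \in E_p : e \geq b - f_{j-1}^{(p)}\}$ (both attained by compactness of $E_p$), I will check that $(\alpha^*, \beta^*)$ is a gap of $E_p$ of size at least $b-a$, contradicting the auxiliary fact. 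Everything else is routine bookkeeping around the decomposition.
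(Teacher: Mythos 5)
Your proposal is correct. Part (i) and the bound $a\le f_{j-1}^{(p)}+r_p$ (via the decomposition $E=\bigcup_{f\in F_p}(f+E_p)$ and the monotone enumeration of $F_p$) coincide with the paper's argument. Where you genuinely diverge is in the hard half of (ii), ruling out $f_{j-1}^{(p)}+r_p\ge b$: the paper walks down the decreasing sequence $\bigl(f_{j-1}^{(p)}+r_i\bigr)_{i\ge p}$, whose consecutive differences $x_{i+1}$ are all smaller than $b-a$, and observes that since it starts above $b$ and converges to $f_{j-1}^{(p)}\le a$, some term of $E$ must land in the gap $(a,b)$. You instead pull the gap back into $E_p$: taking $\alpha^*=\max\bigl(E_p\cap[0,a-f_{j-1}^{(p)}]\bigr)$ and $\beta^*=\min\bigl(E_p\cap[b-f_{j-1}^{(p)},r_p]\bigr)$ (both nonempty since $f_{j-1}^{(p)}\le a$ and, in this case, $r_p\ge b-f_{j-1}^{(p)}$), you get a gap of $E_p$ of length at least $b-a$, which your auxiliary lemma forbids by the same one-term-removal trick used in (i). Both rest on the single fact $x_n<b-a$ for $n>p$; the paper's telescoping walk is more direct and needs no extremal construction, while your version isolates a reusable structural statement (``$E_p$ has no gap as long as $b-a$'') and makes the contradiction purely order-theoretic. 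Minor points worth a sentence in a final write-up: justify that $p$ is well defined (some $x_n\ge b-a$ must exist, else removing any term of a representation of $b>0$ lands in the gap), and note explicitly that $f_{j-1}^{(p)}\in E$ and $f_{j-1}^{(p)}<b$ force $f_{j-1}^{(p)}\le a$, which you use when defining $\alpha^*$.
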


\begin{proof}
Let us observe that $b\in F_{p}$. If not, we have $b=\sum_{n\in A}x_{n}$,
where $A$ contains some $i$ greater than $p$. Hence $x_{i}<b-a$ and $%
b-x_{i}\in E$. Consequently we obtain the element $b-x_{i}\in E$ which
belongs to the gap $(a,b)$, contradiction. So let $b=f_{j}^{(p)}$. Then, by
the definition of a gap, we have $a\geq f_{j-1}^{(p)}$. Now we will show
that $f_{j-1}^{(p)}+r_{p}\leq b$. Suppose $f_{j-1}^{(p)}+r_{p}>b$. Let us
consider the sequence $(f_{j-1}^{(p)}+r_{i})_{i=p}^{\infty }$ which is
decreasing and converging to $f_{j-1}^{(p)}$. Since the difference between
the consecutive terms of this sequence is smaller than $b-a$ (by the
definition of $p$), there exists a term which belongs to $(a,b)$, which
gives a contradiction. Consequently $f_{j-1}^{(p)}+r_{p}\leq a$. On the
other hand $a<f_{i}^{(p)}$ for $i\geq j$ and hence $a\in
\bigcup_{i<j}(f_{i}^{(p)}+E_{p})$. Therefore $a\leq \sup
(f_{j-1}^{(p)}+E_{p})=f_{j-1}^{(p)}+r_{p}$.
\end{proof}

For our purpose we need the following strenghtening of the Second Gap Lemma.

\begin{lem}
\label{lemat3} Suppose that $(a,b)$ is a gap in $E=E(x_{n})$ such that for
any gap $\left( a_{1},b_{1}\right) $ with $b_{1}<a$ we have $b-a>b_{1}-a_{1}$
(in other words $\left( a,b\right) $ is the longest gap from the left). Then 
$b=x_{k}$ for some $k\in \mathbb{N}$ and $a=r_{k}$.
\end{lem}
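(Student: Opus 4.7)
The plan is to combine the Second Gap Lemma with a descending argument on the support of a representation of $b$. First I would apply the Second Gap Lemma to write $b = f_j^{(p)}$ and $a = f_{j-1}^{(p)} + r_p$ with $p = \max\{n : x_n \geq b-a\}$, and then split the proof into two claims: that $b = x_k$ for some $k \leq p$, and that $a = r_k$ for that same $k$.

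For the first claim I argue by contradiction: suppose $b = \sum_{i \in B} x_i$ with $B = \{k_1 < \ldots < k_s\}$ and $s \geq 2$, and set $c_i := \sum_{j>i} x_{k_j} \in E_{k_i}$, so that $c_0 = b$ and $c_i = c_{i-1} - x_{k_i}$. The inclusion $x_{k_1} + E_{k_1} \subseteq E$ together with $(a,b) \cap E = \emptyset$ forces $E_{k_1} \cap (a - x_{k_1}, c_1) = \emptyset$, and defining $c_1^* := \max(E_{k_1} \cap [0, c_1))$ yields a gap $(c_1^*, c_1)$ in $E_{k_1}$ of length at least $d := b-a$. If $c_1 \leq x_{k_1}$ this is already a gap in $E$, because every nonzero shift $f + E_{k_1}$ has $f \geq x_{k_1} \geq c_1$; otherwise I iterate the same argument inside $E_{k_1}$, producing at level $i$ a gap of length $\geq d$ in $E_{k_i}$ ending at $c_i$, and the iteration terminates by $i = s - 1$ since $c_{s-1} = x_{k_s} \leq x_{k_{s-1}}$. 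Because the $c_i$ strictly decrease and $c_1 \leq a$, the resulting gap lies strictly to the left of $(a,b)$ whenever $i \geq 2$, contradicting the leftmost-longest hypothesis; the edge case $i = 1$ with $c_1 = a$ forces $x_{k_1} = d$ and is excluded by a short $\ell_1$-summability argument (which would otherwise require the tail of $(x_n)$ to be eventually constant).

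For the second claim, the inequality $a < b = x_k$ together with the nonincreasing hypothesis forces every representation of $a$ to avoid $x_1,\ldots,x_k$, so $a \in E_k$ and $a \leq r_k$; since $r_k \in E \setminus (a,b)$ we conclude either $r_k \leq a$ (giving $a = r_k$) or $r_k \geq x_k$. In the latter case I replace $k$ by $k^* := \max\{n : x_n = b\}$ and show either $r_{k^*} < x_{k^*}$ (so we are done with $k^*$ in place of $k$) or that a strictly longer gap exists to the left of $(a,b)$, contradicting leftmost-longest. The main obstacle I anticipate is orchestrating the descent in the first claim so that the gap found in $E_{k_i}$ lifts cleanly back to $E$ without being filled by competing shifts; this is precisely where the nonincreasing hypothesis on $(x_n)$ enters the argument, controlling the smallest positive element of each intermediate subset-sum set.
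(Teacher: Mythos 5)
Your overall strategy matches the paper's: get a finite representation of $b$ from the Second Gap Lemma, rule out representations with two or more summands via the leftmost\nobreakdash-longest hypothesis, then pin down $a=r_k$. For the first claim your execution differs in the decomposition. The paper peels off the \emph{smallest} summand $x_{n_m}$ of $b$ in one step: since $x_{n_m}\in E$ and $x_{n_m}<b$, we get $x_{n_m}\le a$, so every gap of $X:=E\cap[0,x_{n_m}]$ is shorter than $b-a$, and the translate $X+(b-x_{n_m})\subseteq E$ stretches from $b-x_{n_m}\le a$ up to $b$ with only short gaps, hence meets $(a,b)$ --- contradiction. You instead peel off the \emph{largest} summand first and descend through $E_{k_1}\supseteq E_{k_2}\supseteq\cdots$, carrying a gap of length $\ge b-a$ until it lands in $E_{k_i}\cap[0,x_{k_i}]$ and lifts back to $E$ strictly to the left of $a$. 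I checked that your descent works: the gap length is preserved at each level, and the termination condition $c_i\le x_{k_i}$ does let the gap survive into $E$ because every nonzero element of $F_{k_i}$ is $\ge x_{k_i}$. It is more laborious than the paper's one-shot translation, and both versions share the same boundary nuisance (a gap whose right endpoint equals $a$), which you correctly dispose of via perfectness of $E$ and the paper silently ignores.

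The one genuinely thin spot is the final sub-case of your second claim, where $r_{k^*}\ge x_{k^*}=b$ for $k^*=\max\{n:x_n=b\}$. You assert that a longer gap then appears to the left of $(a,b)$, but this is precisely where a new construction is required rather than a repetition of earlier steps: the naive attempt of walking the tail partial sums $\sum_{n=k^*+1}^{m}x_n$ past $a$ fails when $x_{k^*+1}\ge b-a$, because the partial sums may jump clean over the gap. The paper's device is to take the least $m$ with $\sum_{n=k^*+1}^{m}x_n>a$ (hence $\ge b$), note that $x_m\le x_{k^*+1}\le a$, and translate $E\cap[0,x_m]$ by $\sum_{n=k^*+1}^{m-1}x_n\le a$, obtaining a subset of $E$ that straddles $(a,b)$ and has only short gaps --- the same translation trick as in the first claim, now applied to the tail rather than to a representation of $b$. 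Supply that argument and your proof closes.
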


\begin{proof}
By the Second Gap Lemma $b$ is a finite sum of terms of $\left( x_{n}\right) 
$. Let $b=x_{n_{1}}+...+x_{n_{m}}$\ with $x_{n_{1}}\geq ...\geq x_{n_{m}}$.
Suppose that $m\geq 2$.\ Firstly observe that $x_{n_{m}}\geq b-a$ (indeed,
if $x_{n_{m}}<b-a$ then $b-x_{n_{m}}\in \left( a,b\right) \cap E$ which is
impossible). Of course $x_{n_{m}}<b$ and, since $\left( a,b\right) $ is a
gap, $x_{n_{m}}\leq a$. Any gap in the set $X:=E\cap \left[ 0,x_{n_{m}}%
\right] $ is shorter than $b-a$. On the other hand, $b\in X+\left(
b-x_{n_{m}}\right) $ and $X+\left( b-x_{n_{m}}\right) \subset E$, so $\left(
a,b\right) \cap \left( X+\left( b-x_{n_{m}}\right) \right) =\emptyset $, a
contradiction. Thus $m=1$ which means that $b=x_{k}$ for some $k\in \mathbb{N%
}$.\newline
Since $a\in E$, $r_{k}\geq a$. Suppose that $r_{k}>a$. Let $m$ be the
smallest number satisfying $\sum_{n=k+1}^{m}x_{n}>a$. Hence $%
\sum_{n=k+1}^{m}x_{n}>b$, because $\left( a,b\right) $ is a gap. Let now $%
X:=E\cap \left[ 0,x_{m}\right] $. Then the set $X+\sum_{n=k+1}^{m-1}x_{n}$
is included in $E$ and has allgaps shorter than $b-a$, which gives a
contradiction again.
\end{proof}

Note that for $q\geq 1/2$ the achievement set $E\left( q^{n}\right) $\ is an
interval. For $q\in (0,1/2)$ let $C_{q}=E(\left( 1-q\right) q^{n-1})$. Then $%
C_{q}=[0,1]\setminus \bigcup_{n=1}^{\infty }U_{n}^{q}$ where $U_{n}^{q}$ is
a union of $2^{n-1}$ open intervals, each of the length $q^{n-1}(1-2q)$,
removed from $[0,1]$ in the $n$-th step in $C_{q}$ construction, exactly as
in the construction of the classic Cantor set $C=C_{1/3}$.

Now, let us divide $\mathbb{N}$ into two infinite sets: $H$ and its
complement $H^{c}$. Consider the set $N_{q}^{H}=[0,1]\setminus \bigcup_{n\in
H}U_{n}^{q}$. $N_{q}^{H}$ is evidently a Cantorval and it is regular in the
sense that in each step of the construction we remove or we leave all the
components of the set $U_{n}^{q}$. Let us observe that the Cantor set $C_{q}$
is the boundary of any Cantorval $N_{q}^{H}$. For $H=2\mathbb{N}$ we write
simply $N_{q}$ instead of $N_{q}^{2\mathbb{N}}$. Note that $N=N_{1/3}$ is
the generic example of Cantorval considered by Guthrie and Nymann which was
proved to be homeomorphic with $\mathbf{T}=E(\frac{3}{4},\frac{2}{4},\frac{3%
}{16},\frac{2}{16},\frac{3}{64},\ldots )$.\ Thus the sets $N_{q}^{H}$ form a
wide class of Cantorvals containing the very important example. In the
following figure is presented the nested construction of the set $N_{q}$,
for some $q\leq 1/3$.

\begin{center}
\includegraphics[height=4cm,width=16cm]{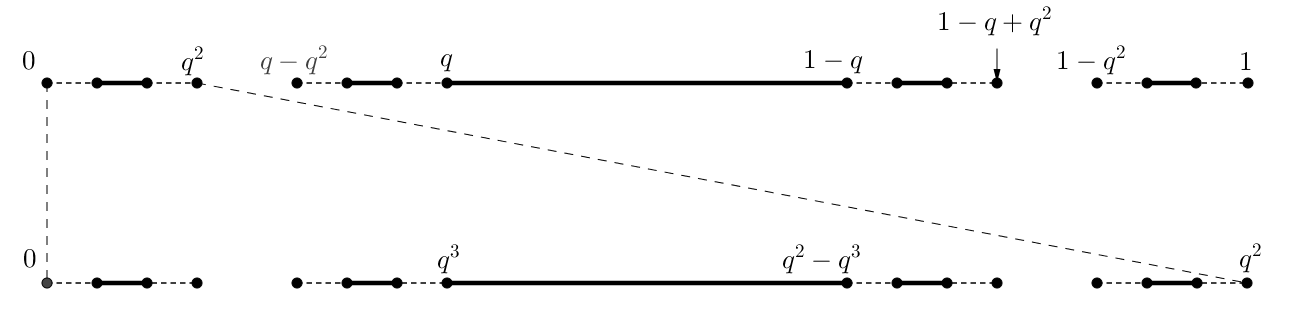}\\[0pt]
Figure 1
\end{center}

In the next lemma we will consider the longest interval which is contained
in the finite union of translations of $N_{q}^{H}$. It is easy to see that
if $1\notin H$, $J=[q,1-q]$ and $t=1-2q$, then the longest interval $I$
which is contained in $N_{q}^{H}\cup (t+N_{q}^{H})$ might have length $%
2\left\vert J\right\vert $. However, it can be a bit longer. For example, if 
$(t+N_{q}^{H})$\ contains the component $\left[ a,b\right] $\ with $a<q\leq
b $\ then $\inf I<\inf J$. In the analogous way $\sup I$\ might be bigger
than $\sup (t+J)$. What is even more interesting, there are such $q$'s for
which $\sup J<\inf \left( t+J\right) $\ and $J\cup \left( t+J\right) \subset
I$\ - see Figure 2.

\begin{center}
\includegraphics[height=3cm,width=15cm]{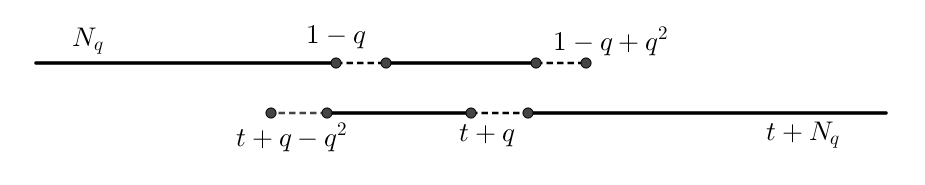}\\[0pt]
Figure 2
\end{center}

Such a situation can be observed, for example, for $q=1/4$. One can see that 
$N_{\frac{1}{4}}\cup (N_{\frac{1}{4}}+\frac{35}{64})$ contains an interval $%
I $ with $\left\vert I\right\vert >2\left\vert J\right\vert $. In fact $%
|I|=2\cdot \frac{1}{2}+\frac{3}{64}$.

\begin{lem}
\label{MainLemma} Let $N_{q}^{H}$ be a regular Cantorval, $1\notin H$, $2\in
H$. \newline
(i) If $q\leq 1/3$ and $0=t_{1}<t_{2}<\dots <t_{l}$ for some $l\in \mathbb{N}
$, then the length of the longest interval contained in the union of
translations $\bigcup_{i=1}^{l}(t_{i}+N_{q}^{H})$ is smaller than $%
(l(1-2q)+(l-1)q^{2})$. \newline
(ii) If $q\in \left( 1/3,\sqrt{2}-1\right) $\ and $t>0$\ then the length of
the longest interval contained in $N_{q}^{H}\cup (t+N_{q}^{H})$\ is smaller
than $5(1-2q)$.\newline
(iii) If $q\in \left( \sqrt{2}-1,1/2\right) $\ and $t>0$\ then the length of
the longest interval contained in $N_{q}^{H}\cup (t+N_{q}^{H})$\ is smaller
than $4(1-2q)$.
\end{lem}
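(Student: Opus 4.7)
\emph{Proof plan.} The approach is induction on $l$, using a dissection of each translate of $N_{q}^{H}$ into one long main interval and Cantor-like fringes. I first record three structural facts about $N_{q}^{H}$ under the standing hypotheses ($1\notin H$, $2\in H$, with both $H$ and $H^{c}$ infinite). Since $U_{2}^{q}$ is removed we have $N_{q}^{H}\subseteq[0,q^{2}]\cup[q-q^{2},1-q+q^{2}]\cup[1-q^{2},1]$. Since $U_{1}^{q}=(q,1-q)$ survives, the main interval $M:=[q,1-q]\subseteq N_{q}^{H}$ has length $1-2q$, and it cannot be enlarged inside $N_{q}^{H}$ because $q$ and $1-q$ are accumulation points of removed intervals from $U_{k}^{q}$, $k\in H$ (using that $H$ is infinite). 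By the self-similarity $N_{q}^{H}\cap[0,q]=q\cdot N_{q}^{H'}$ with $H':=\{n-1:n\in H,\ n\geq 2\}$, the hypothesis $2\in H$ gives $1\in H'$, so $N_{q}^{H'}$ has its own middle removed and is contained in $[0,q]\cup[1-q,1]$; consequently every maximal interval of $N_{q}^{H}$ disjoint from $M$ has length at most $q\cdot L(H')\leq q^{2}$, where $L(\cdot)$ denotes the length of the longest interval of the associated Cantorval. The hypothesis $q\leq 1/3$ then gives $q^{2}<1-2q$, so $M$ is the unique longest interval of $N_{q}^{H}$.

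For the induction I take an interval $I\subseteq\bigcup_{i=1}^{l}(t_{i}+N_{q}^{H})$ with $0=t_{1}<\dots<t_{l}$ and locate the translated main intervals $M_{i}:=t_{i}+[q,1-q]$ that meet $I$. At most $l$ of them do, and each intersection contributes a piece of $I$ of length at most $1-2q$. The portion of $I$ outside $\bigcup_{i}M_{i}$ splits into at most $l-1$ \emph{bridging gaps} between consecutive intersected mains, with any end-tails absorbed into the adjacent extended main. Each bridging gap must be covered by the fringes $t_{j}+(N_{q}^{H}\cap([0,q]\cup[1-q,1]))$ of various translates, and the structural facts above show that each such fringe is built from intervals of length at most $q^{2}$. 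Summing across at most $l$ mains and at most $l-1$ bridging gaps will give $|I|<l(1-2q)+(l-1)q^{2}$ for $l\geq 2$; the case $l=1$ reduces to $|I|\leq 1-2q$ and is handled directly by the uniqueness of the longest interval in $N_q^H$.

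The crux is the per-gap bound: each bridging gap has length strictly less than $q^{2}$. A single fringe is a scaled copy of $N_{q}^{H}$ of diameter $q^{2}$, so one fringe alone bounds a gap by $q^{2}$; naively two fringes of adjacent translates could sum to $2q^{2}$. However, the reflection symmetry $N_{q}^{H}=1-N_{q}^{H}$ together with the self-similar structure of the removed $U_{k}^{q}$'s forces any two such shifted fringes to carry matched Cantor-style holes, so their union can cover a connected interval of length at most $q^{2}(1-q)<q^{2}$, a bound saturated in the $q=1/4$ example worked out before the lemma. Contributions from additional translates either lie inside these same two fringes or inherit the same obstruction, because for a gap of length $<q^{2}$ the location constraints required of a third translate whose non-main portion reaches the gap interior are automatically incompatible (this is where $q\leq 1/3$ is essentially used, since it controls the relative scales of $q^{2}$ against the $U_{2}^{q}$-gap $q(1-2q)$). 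Verifying this last exclusion, and thus ruling out configurations where three or more translates conspire to exceed the per-gap bound, is the most delicate step of the argument.
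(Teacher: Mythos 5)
There is a genuine gap --- in fact two. First, your proposal only addresses part (i) of the lemma. Parts (ii) and (iii), which concern $q\in\left(1/3,\sqrt{2}-1\right)$ and $q\in\left(\sqrt{2}-1,1/2\right)$ and are exactly what the main theorem needs in its Case 3 for $q>1/3$, are not mentioned at all. They cannot be obtained by specializing your part (i) argument to $l=2$: your whole setup rests on $q^{2}<1-2q$, i.e.\ on $q<\sqrt{2}-1$, so it says nothing in the range of (iii), and even in the range of (ii) the target bound $5(1-2q)$ requires comparing the fringe scale $q^{2}$ to $|J|=1-2q$ rather than absorbing it additively. (The paper gets (ii) from the crude estimate $|I|<2|J|+3q^{2}$ together with $q^{2}<|J|$ for $q<\sqrt{2}-1$, and (iii) from a separate chain-of-components argument exploiting that for $q\geq\sqrt{2}-1>2/5$ each component of $N_{q}^{H}$ is far, relative to its own length, from any longer component.)

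Second, even for part (i) what you have written is a plan rather than a proof, and you say so yourself: the assertion that two (or more) overlapping fringes can jointly cover a connected interval of length at most $q^{2}(1-q)<q^{2}$ is exactly the crux, and ``reflection symmetry forces matched Cantor-style holes'' is a heuristic, not an argument --- it is not even clear a priori that the relevant gaps of the two fringes line up rather than interleave, which is precisely what a shift is liable to destroy. You acknowledge that ruling out three or more translates conspiring on one bridging gap ``is the most delicate step,'' i.e.\ it is unverified. For comparison, the paper's route for $l=2$ is to split on $t\leq|J|$ versus $t>|J|$: in the first case the only possible excess over $2|J|$ comes from two end components of length at most $q^{3}$ each, giving $|I|<2|J|+2q^{3}<2|J|+q^{2}$; in the second case the gap $\left(1-q+q^{2},1-q^{2}\right)$ of $N_{q}^{H}$ (the right half of $U_{2}^{q}$) forces $t\leq|J|+q^{2}$, since otherwise a point of $\left[1-q+q^{2},t+q\right]$ is uncovered. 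Your accounting of ``end-tails absorbed into the adjacent extended main'' also needs this kind of explicit control, since the extensions of the main intervals and the bridging contributions must be bounded simultaneously against the single budget $(l-1)q^{2}$. Until the per-gap covering bound is actually proved (and parts (ii)--(iii) are supplied), the lemma is not established.
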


\begin{proof}
Denote by $J$ the longest sub-interval $[q,1-q]$ of $N_{q}^{H}$ and by $I$
the longest interval contained in $N_{q}^{H}\cup (t+N_{q}^{H})$.

(i) First, fix $t>0$ and consider the union $N_{q}^{H}\cup (t+N_{q}^{H})$.
If $t=1-2q=\left\vert J\right\vert $ then $|I|=2|J|$. If $t<\left\vert
J\right\vert $ then there may exist intervals $K_{1}\subset t+N_{q}^{H}$ and 
$K_{2}\subset N_{q}^{H}$ such that $q\in K_{1}$ and $\inf K_{1}<q$, and $%
t+1-q\in K_{2}$ and $\sup K_{2}<t+1-q$. Since $\left\vert K_{1}\right\vert
\leq q^{3}$ and $\left\vert K_{2}\right\vert \leq q^{3}$, we obtain%
\begin{equation*}
\left\vert I\right\vert <2\left\vert J\right\vert +2q^{3}<2\left\vert
J\right\vert +q^{2}\text{.}
\end{equation*}%
Suppose now that $t>|J|$ and remind that $\left( 1-q-q^{2},1-q^{2}\right) $
is a gap in $N_{q}^{H}$. It follows that if $t-\left\vert J\right\vert
>q^{2} $ then $\left[ 1-q+q^{2},t+q\right] \setminus \left( N_{q}^{H}\cup
(t+N_{q}^{H})\right) \neq \emptyset $. It means that $\left\vert
I\right\vert <2\left\vert J\right\vert +q^{2}$.

Let $0=t_{1}<t_{2}<t_{3}$ and denote by $I$ the largest interval contained
in the $\bigcup_{i=1}^{3}(t_{i}+N_{q}^{H})$ (if there is more then one such
interval, we denote by $I$ the right one). Denote by$\ d$ the number $%
3(1-2q)+2q^{2}$. If $t_{2}<\left\vert J\right\vert +q^{2}$ and $%
t_{3}<t_{2}+\left\vert J\right\vert +q^{2}$ then (in the same manner as in
the previous consideration) we obtain that $\left\vert I\right\vert <d$. If $%
t_{3}>t_{2}+\left\vert J\right\vert +q^{2}$ then $I=\left( t_{3}+J\right) $
or $I\subset N_{q}^{H}\cup (t_{2}+N_{q}^{H})$. In both cases $\left\vert
I\right\vert <d$. Finally, if $t_{2}>\left\vert J\right\vert +q^{2}$ then $%
J\cap I=\emptyset $ or the set $t_{3}+N_{q}^{H}$ "fills the gap" between $J$
and $t_{2}+J$, which is possible only if $t_{3}<\left( 1-q\right) +q^{2}$.
In any case $\left\vert I\right\vert <d$.

In a similar way one can check that for any $l\in N$ and $%
0<t_{1}<t_{2}<\dots <t_{l}$ the length of the largest interval contained in
the union of translations $\bigcup_{i=1}^{l}(t_{i}+N_{q}^{H})$ is not
greater than $l(1-2q)+(l-1)q^{2}$.

(ii) Note that for any $q\in \left( 0,1/2\right) $\ the length of $I$\ is
smaller then $2\left\vert J\right\vert +3q^{2}$. Moreover,\ for $q<\sqrt{2}%
-1 $,\ the interval $J$\ is longer then $q^{2}$. Therefore, for such $q$'s,\ 
$\left\vert I\right\vert <5\left\vert J\right\vert $.

(iii) Suppose now that $q\geq \sqrt{2}-1>2/5$. Consequently, $\left\vert
J\right\vert =1-2q<1/5$ and for any component $K$ of $N_{q}^{H}$, the
distance between $K$ and a component longer than $K$ is bigger than $%
2\left\vert K\right\vert $.

\textbf{Case 1. }$t\leq \left\vert J\right\vert $. We will show that $\sup
I<\left( 2-3q\right) +\left\vert J\right\vert $. If there is no component $%
\left[ a,b\right] $ of $N_{q}^{H}$ such that $a\leq \sup \left( t+J\right)
<b $, then $\sup I=\sup \left( t+J\right) \leq 2-3q$. If such a component
exists, denote it by $\left[ a_{1},b_{1}\right] $. Note that $%
b_{1}-a_{1}\leq q^{2}\left\vert J\right\vert $,\ since the longest component
of $N_{q}^{H}\cap \left[ 1-q,1\right] $ is not bigger than $q^{2}\left\vert
J\right\vert $. If there exists a component $\left[ a_{2},b_{2}\right] $ of $%
t+N_{q}^{H}$ such that $a_{2}\leq b_{1}<b_{2}$ then $b_{2}-a_{2}\leq q\left(
b_{1}-a_{1}\right) $. Inductively, if there exists a component $\left[
a_{n+1},b_{n+1}\right] $ such that $a_{n+1}\leq b_{n}<b_{n+1}$, then $%
b_{n+1}-a_{n+1}<q\left( b_{n}-a_{n}\right) $. If the sequence $\left( \left[
a_{n},b_{n}\right] \right) $ is finite and $\left[ a_{n_{0}},b_{n_{0}}\right]
$ is its last element, then $\sup I=b_{n_{0}}<\left( 2-3q\right)
+q\left\vert J\right\vert $. If not, we have to look more closely at the
structure of $N_{q}^{H}$. \newline
Since the set $H$ is infinite, there is a gap $\left( \alpha ,\beta \right) $
in $N_{q}^{H}\cap \left[ 0,\frac{1}{2}\left\vert J\right\vert \right] $.
Moreover, there is $n_{0}$ such that $\left\vert K_{n}\right\vert <\beta
-\alpha $ for any $n>n_{0}$. Hence 
\begin{equation*}
\sup I<b_{n_{0}}+\frac{1}{2}\left\vert J\right\vert <\left( 2-3q\right)
+\left\vert J\right\vert \text{.}
\end{equation*}%
In the same manner we check that $\inf I>\inf J-\left\vert J\right\vert $.
Finally $\sup I-\inf I<2-3q+\left\vert J\right\vert -\left( q-\left\vert
J\right\vert \right) =2-4q+2\left\vert J\right\vert =4\left\vert
J\right\vert $.

\textbf{Case 2.} $t>\left\vert J\right\vert $. In this case, using analogous
arguments as in Case 1 one can prove that $\left[ 1-q,t+q\right] \setminus
\left( N_{q}^{H}\cup (t+N_{q}^{H})\right) \neq \emptyset $. Indeed, suppose
that $1-q<t+q$. If there is no component $\left[ a,b\right] $ of $%
t+N_{q}^{H} $ such that $a\leq 1-q<b$, then $\left[ 1-q,t+q\right] \setminus
\left( N_{q}^{H}\cup (t+N_{q}^{H})\right) \neq \emptyset $. If such a
component exists, denote it by $\left[ a_{1},b_{1}\right] $. Note that $%
b_{1}-a_{1}<\frac{1}{2}\left( t+q-b_{1}\right) $. If there exists a
component $\left[ a_{2},b_{2}\right] $ of $N_{q}^{H}$ such that $a_{2}\leq
b_{1}<b_{2}$ then $b_{2}-a_{2}<q\left( b_{1}-a_{1}\right) $. Inductively, if
there exists a component $\left[ a_{n+1},b_{n+1}\right] $ such that $%
a_{n+1}\leq b_{n}<b_{n+1}$, then $b_{n+1}-a_{n+1}<q\left( b_{n}-a_{n}\right) 
$. Moreover, there is a gap in $N_{q}^{H}\cap \left[ 0,t+q-\left( q-1\right) %
\right] $. In consequence, there is a gap in $\left[ 1-q,t+q\right] \cap
\left( N_{q}^{H}\cup (t+N_{q}^{H})\right) $.\ It means that $J$\ and $t+J$\
are contained in different components of $N_{q}^{H}$. Therefore $\left\vert
I\right\vert <3\left\vert J\right\vert $\ which ends the proof.
\end{proof}

\begin{thm}
\label{mainResult} No regular Cantorval $N_{q}^{H}$, with $1\notin H$ and $%
2\in H$, is an achievement set of a sequence.
\end{thm}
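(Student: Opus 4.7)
The plan is to suppose $N_q^H = E(x_n)$ for some nonincreasing summable positive sequence and derive a contradiction by locating a forced term via Lemma \ref{lemat3} and then using Lemma \ref{MainLemma} to bound how the translates can cover the central interval $J = [q, 1-q]$ of $N_q^H$. First I would observe that, since $1 \notin H$ and $2 \in H$, the open interval $(q^2, q-q^2)$ is a gap of $N_q^H$ of length $q(1-2q)$, and every gap strictly to its left (necessarily coming from some $U_n^q$ with $n \in H$ and $n \geq 3$) has length at most $q^2(1-2q) < q(1-2q)$. Lemma \ref{lemat3} therefore forces an index $k$ with $x_k = q(1-q)$ and $r_k = q^2$. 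Because $(x_n)$ is nonincreasing, $x_i \geq q(1-q)$ for every $i \leq k$, and $\sum_{i \leq k} x_i = 1 - q^2$, we also obtain $k \leq (1+q)/q$ and hence $|F_k| \leq 2^{\lfloor 1 + 1/q \rfloor}$.

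Next I would set up the covering. The smallest positive element of $F_k$ equals $x_k = q(1-q) > q^2$, so only the translate by $f = 0$ can meet $[0, q^2]$; hence $E_k = E \cap [0, q^2]$. Combined with the self-similarity $N_q^H \cap [0, q^2] = q^2 N_q^{H'}$, where $H' := \{n-2 : n \in H,\, n \geq 3\}$, this gives $E_k = q^2 N_q^{H'}$ and so $E = F_k + q^2 N_q^{H'}$. The interval $J \subset E$ must therefore be covered by those translates $f + q^2 N_q^{H'}$ with $f \in F_k^{*} := F_k \cap [q-q^2, 1-q]$. Setting $l := |F_k^{*}|$ and rescaling by $q^{-2}$, an interval of length $(1-2q)/q^2$ is contained in a union $\bigcup_{i=1}^{l}(t_i + N_q^{H'})$ of $l$ translates of $N_q^{H'}$.

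Then I would invoke Lemma \ref{MainLemma} in the three ranges of $q$. For $q \in (\sqrt{2}-1, 1/2)$, Lemma \ref{MainLemma}(iii) bounds the longest interval produced by any two translates by $4(1-2q)$, strictly smaller than the required $(1-2q)/q^2$; combined with the observation that a single translate cannot carry an interval longer than $1-2q$, this rules out small $l$, and continuing the component-tracking argument of Lemma \ref{MainLemma}(iii) to $l \geq 3$ keeps the bound strictly under $(1-2q)/q^2$. The case $q \in (1/3, \sqrt{2}-1)$ is treated analogously via Lemma \ref{MainLemma}(ii) with the constant $5$. For $q \leq 1/3$, Lemma \ref{MainLemma}(i) yields $(1-2q)/q^2 < l(1-2q) + (l-1)q^2$, forcing $l$ to grow like $1/q^2$; this is to be contradicted by $l \leq |F_k|$, if necessary after iterating Lemma \ref{lemat3} on $E_k$ (itself a scaled regular Cantorval), which yields forced terms $x_{k_m} = q^{2m+1}(1-q)$ at every level and allows one to repeat the estimate at the deeper scales $q^{2m}$ until the exponential bound on $|F_k|$ is overpowered.

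The principal obstacle is case (i): the bound $|F_k| \leq 2^{O(1/q)}$ is not immediately smaller than the polynomial lower bound $l \geq \Omega(1/q^2)$, so the contradiction requires either the iterative descent indicated above, or a sharper combinatorial count using the forced symmetry of $F_k$ about $(1-q^2)/2$ together with the inclusion $F_k \subset N_q^H$, to show that $F_k$ is far sparser than $2^k$ inside $[q-q^2, 1-q]$. A secondary subtlety is that $H'$ need not satisfy $1 \notin H'$ and $2 \in H'$; when it fails (for instance if $3 \in H$, so that $1 \in H'$), the set $N_q^{H'}$ is contained in the thinner $[0,q]\cup[1-q,1]$, and the analogue of Lemma \ref{MainLemma} yields only stronger bounds, so the contradiction is preserved.
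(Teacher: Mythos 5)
Your opening moves coincide with the paper's: Lemma \ref{lemat3} applied to the gap $(q^2,q-q^2)$ forces $x_k=q-q^2$ and $r_k=q^2$, and then $J$ must be covered by the translates $f+E_k\subset f+[0,q^2]$ with $f\in F_k$. But the way you try to close the argument does not work, and you say so yourself: in the main case $q\le 1/3$, Lemma \ref{MainLemma}(i) only forces the number $l$ of useful translates to be of order $1/q^2$, while your a priori bound on $|F_k|$ is $2^{O(1/q)}$, so there is no contradiction. Neither of your proposed repairs is carried out, and the ``iterative descent'' suggestion does not obviously help: at each deeper scale you face the same polynomial-versus-exponential mismatch, so iterating the estimate does not overpower the bound on $|F_k|$. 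As written, the proof is incomplete precisely at its decisive step.

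The idea you are missing is a localization plus an arithmetic constraint on $F_k$, which is how the paper avoids any global count of translates. From $\sum_i x_i=1$ and $r_k=q^2$ one gets $1-q^2=x_1+\dots+x_k\in F_k$ and $1-q=x_1+\dots+x_{k-1}\in F_{k-1}$, whence $F_k\cap[1-q,1-q^2)=\{1-q\}$ and $\bigl((1-q)^2,1-q\bigr)\cap F_{k-1}=\emptyset$ (these intervals are gaps of $N_q^H$, and $F_k\subset E$). The paper then counts only $m:=\bigl|F_k\cap(q-q^2,\,2(q-q^2)]\bigr|$, i.e.\ the offsets capable of covering the left portion of $J$ near $q$, and observes that, since any sum of two terms $\ge x_k$ is at least $2(q-q^2)$, all but possibly one of these elements are single terms $x_i$ of the sequence. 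If $m>1/q$, suitable subset sums of these terms are forced into the forbidden window $\bigl((1-q)^2,1-q\bigr)\cup(1-q,1-q^2)$, or else all the sums $\sum_{i\ne i_0}y_i$ are equal to $1-q$, contradicting distinctness of the $y_i$. If $m\le 1/q$, Lemma \ref{MainLemma} is applied not to all of $J$ but only to the translates starting in that window: they cannot reach past roughly $2q-2q^2+q^3-q^2(1-q)^2$, while the next available offset in $F_k$ is at least $2(q-q^2)$ and its translate cannot cover the remaining sliver, so the covering of $J$ already fails near its left endpoint. This dichotomy on $m$, rather than a cardinality bound on $F_k$, is what produces the contradiction; without it your argument has a genuine gap.
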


\begin{proof}
Suppose that $N_{q}^{H}=E(x_{i})$ for some non-increasing sequence $(x_{i})$
with positive terms. Then, by Lemma \ref{lemat3}, we conclude that there is $%
k\in \mathbb{N}$ such that $x_{k}=q-q^{2}$ and $q^{2}=r_{k}$. Since $%
1=\sum_{j=1}^{\infty }x_{j}$, we have $1-q^{2}=1-r_{k}=x_{1}+x_{2}+...+x_{k}%
\in F_{k}$ . Consequently, $1-q=1-q^{2}-\left( q-q^{2}\right)
=x_{1}+x_{2}+...+x_{k-1}\in F_{k-1}$. Hence $F_{k}\cap \lbrack
1-q,1-q^{2})=\{1-q\}$ and $(1-q-(q-q^{2}),1-q)\cap
F_{k-1}=((1-q)^{2},1-q)\cap F_{k-1}=\emptyset $. Since the whole Cantorval $%
N_{q}^{H}=F_{k}+E_{k}$, the interval $J$ is covered by some number of the
sets $\sum_{i=1}^{k}\varepsilon _{i}x_{i}+E_{k}\subset
\sum_{i=1}^{k}\varepsilon _{i}x_{i}+[0,q^{2}]$ for $(\varepsilon _{i})\in
\{0,1\}^{k}$.

Let $m:=|F_{k}\cap (q-q^{2},2(q-q^{2})]|$. Enumerate $F_{k}\cap
(q-q^{2},2(q-q^{2})]$ as $\{y_{1}<y_{2}<\dots <y_{m}\}$. Note that $%
x_{k}+x_{k-1}\geq 2(q-q^{2})$. Therefore $y_{1},y_{2},\dots ,y_{m-1}$ are
among elements of the set $\{x_{i}:x_{i}>x_{k}\}$. The element $y_{m}$ can
be in $\{x_{i}:x_{i}>x_{k}\}$ or it can be equal to $%
x_{k}+x_{k-1}=2(q-q^{2}) $ if $x_{k}=x_{k-1}$. Now we consider three cases.

\textbf{Case 1.} $m>\frac{1}{q}$ and $\sum_{i=1}^{m}y_{i}>1-q+(2q-2q^{2})$.
Then $\sum_{i=1}^{m-1}y_{i}>1-q$. Note that $\sum_{i=1}^{m-1}y_{i}\in
F_{k-1} $. If $\sum_{i=1}^{m-1}y_{i}\in (1-q,1-q^{2})$, we have a
contradiction. So suppose that $\sum_{i=1}^{m-1}y_{i}\geq 1-q^{2}$. Let $%
A\subset \{1,2,\dots ,m-1\}$ be such that $\sum_{i\in A}y_{i}\geq 1-q^{2}$
and for any $B\subset \{1,2,\dots ,m-1\}$ such that $\sum_{i\in B}y_{i}\geq
1-q^{2}$ we have $\sum_{i\in B}y_{i}\geq \sum_{i\in A}y_{i}$. Clearly $%
\left\vert A\right\vert \geq 2$. Take any $i_{0}\in A$ and put $A^{\prime
}=A\setminus \{i_{0}\}$. Since $y_{i_{0}}<2(q-q^{2})$, then 
\begin{equation*}
\sum_{i\in A^{\prime }}y_{i}=\sum_{i\in
A}y_{i}-y_{i_{0}}>1-q^{2}-2(q-q^{2})=(1-q)^{2}.
\end{equation*}%
Note that $\sum_{i\in A^{\prime }}y_{i}\in F_{k-1}$. If $\sum_{i\in
A^{\prime }}y_{i}\neq 1-q$, we have a contradiction. If the equality $%
\sum_{i\in A^{\prime }}y_{i}=1-q$ holds, we can take another index $%
i_{1}\neq i_{0}$ instead of $i_{0}$ and we can put $A^{\prime \prime
}=A\setminus \{i_{1}\}$. Then $\sum_{i\in A^{\prime \prime }}y_{i}\in
F_{k-1} $ and $\sum_{i\in A^{\prime \prime }}y_{i}\in ((1-q)^{2},1-q)\cup
(1-q,1-q^{2})$, and we reach a contradiction as well.

\textbf{Case 2.} $m>\frac{1}{q}$ and $\sum_{i=1}^{m}y_{i}\leq
1-q+(2q-2q^{2}) $. Since $y_{1}>q-q^{2}$, 
\begin{equation*}
\sum_{i=1}^{m}y_{i}>m\left( q-q^{2}\right) >\frac{1}{q}\left( q-q^{2}\right)
=1-q\text{.}
\end{equation*}%
Fix $i_{0}\leq m$. If\ $\sum\nolimits_{i\neq i_{0}}y_{i}>1-q$ then $%
\sum\nolimits_{i\neq i_{0}}y_{i}>1-q^{2}$, because $\left(
1-q,1-q^{2}\right) \cap F_{k}=\emptyset $. Therefore%
\begin{equation*}
\sum\nolimits_{i\leq m}y_{i}=\sum\nolimits_{i\neq
i_{0}}y_{i}+y_{i_{0}}>1-q^{2}+q-q^{2}
\end{equation*}%
which contradicts the assumption $\sum_{i=1}^{m}y_{i}\leq 1-q+(2q-2q^{2})$.
Hence $\sum\nolimits_{i\neq i_{0}}y_{i}\leq 1-q$. Since $\sum\nolimits_{i%
\neq i_{0}}y_{i}>1-q^{2}$ and $\left( 1-q,1-q^{2}\right) \cap
F_{k}=\emptyset $ \ it means that $\sum\nolimits_{i\neq i_{0}}y_{i}=1-q$.%
\newline
Thus we have proved that $\sum\nolimits_{i\neq i_{0}}y_{i}=1-q$ for any $%
i_{0}\leq m$. It gives a contradiction because $y_{i}$ are different from
each other.

\textbf{Case 3.} $m\leq \frac{1}{q}$. We will consider separately $N_{q}^{H}$
for $q>1/3$ and for $q\leq 1/3$. Suppose that $q>1/3$. It means that $m\leq
2 $. The assumption that $m=1$ immediately leads to a contradiction. Thus
there exist exactly two elements $x_{k-1},x_{k-2}\in (q-q^{2},2(q-q^{2})]$.
Note that for $q\in \left[ 1/3,1\right] $ the inequality $\left( 1-q\right)
^{2}\leq 2\left( q-q^{2}\right) $ holds. Moreover, $((1-q)^{2},1-q)\cap
F_{k-1}=\emptyset $. Therefore there are exactly two elements of the
sequence $\left( x_{i}\right) $ in the interval $(q-q^{2},1-q)$. It follows
that%
\begin{equation*}
(q,1-q)\subset \left( x_{k-1}+\left( N_{q}^{H}\cap \left[ 0,x_{k}\right]
\right) \right) \cup \left( x_{k-2}+\left( N_{q}^{H}\cap \left[ 0,x_{k}%
\right] \right) \right) \text{.}
\end{equation*}%
Note that the length of the longest component of $N_{q}^{H}\cap \left[
0,x_{k}\right] $\ is at most $q^{2}\left( 1-2q\right) $. If $q<\sqrt{2}-1$\
then $q^{2}<0.42^{2}<1/5$\ which gives a contradiction with Lemma \ref%
{MainLemma}(ii). For $q\geq \sqrt{2}-1$\ we use the inequality $q^{2}<1/4$\
and obtain a contradiction with Lemma \ref{MainLemma}(iii).\newline
Suppose now that $q\leq 1/3$ and $x_{k}=x_{k-1}$. Then $%
y_{m}=x_{k}+x_{k-1}=2q-2q^{2}$. First assume that $n+2\notin H$ and put $%
H^{\prime \prime }=\left( H-2\right) \cap \mathbb{N}$. By Lemma \ref%
{MainLemma}(i) the longest interval $I$ contained in the translations $%
\bigcup_{i=1}^{m-1}(y_{i}+q^{-n+3}N_{q}^{H^{\prime \prime }})$ has the
length smaller than $q^{2}((m-1)(1-2q)+(m-2)q^{2})$. If the left endpoint of 
$I$ equals $q$, then the right endpoint of $I$ is smaller than 
\begin{equation*}
q+q^{2}((m-1)(1-2q)+(m-2)q^{2})=q+q^{2}(m(1-2q)+(m-1)q^{2})-q^{2}(1-q)^{2}
\end{equation*}%
\begin{equation*}
=q+q^{2}(m(1-2q+q^{2})-q^{2})+q^{2}(1-q)^{2}\leq
q+q(1-2q+q^{2})-q^{4}-q^{2}(1-q)^{2}
\end{equation*}%
\begin{equation*}
\leq 2q-2q^{2}+q^{3}-q^{2}(1-q)^{2}.
\end{equation*}%
On the other hand, the next translate $y_{m}+q^{-n+3}N_{q}^{H^{\prime \prime
}}$ cannot cover interval $[2q-2q^{2},2q-2q^{2}+q^{3}]$. Therefore the union
of all translations cannot cover the interval $%
[2q-2q^{2}+q^{3}-q^{2}(1-q)^{2},2q-2q^{2}+q^{3}]$, and we reach a
contradiction. Note that if $n+2\in H$, then the estimation is even sharper.

Finally, suppose that $q\leq 1/3$, $m\leq \frac{1}{q}$ and $x_{k}<x_{k-1}$.
Then $y_{m}$ is one of $x_{i}$. Assume as before that $n+2\notin H$. By
Lemma \ref{MainLemma}(i) the longest interval $I$ contained in the
translations $\bigcup_{i=1}^{m}(y_{i}+q^{-n+3}N_{q}^{H^{\prime \prime }})$
has the length smaller than $q^{2}(m(1-2q)+(m-1)q^{2})$. If the left
endpoint of $I$ equals $q$, then the right endpoint of $I$ is smaller than $%
2q-2q^{2}+q^{3}$. Since $x_{k}<x_{k-1}$, then $x_{k}+x_{k-1}>2q-2q^{2}$ and
the translate $x_{k}+x_{k-1}+q^{-n+3}N_{q}^{H^{\prime \prime }}$ cannot
cover the interval $[2q-2q^{2}+q^{3},x_{k}+x_{k-1}+q^{3}]$. As before, we
reach a contradiction.
\end{proof}

\begin{cor}
No regular Cantorval $N_{q}^{H}$, with $1\notin H$ and $2\in H$, is
isometric to an achievement set of a sequence.
\end{cor}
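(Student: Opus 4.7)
The plan is to reduce the isometric statement to Theorem \ref{mainResult} itself by proving that any isometry of $N_q^H$ onto an achievement set must be the restriction of an affine isometry of $\mathbb{R}$, and then using the symmetry of $N_q^H$ under the reflection $x\mapsto 1-x$ to dispose of the orientation-reversing case.

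First I would show that any bijective isometry $\varphi\colon N_q^H\to E(x_n)$ has the form $x\mapsto \epsilon x+c$ for some $\epsilon\in\{-1,+1\}$ and $c\in\mathbb{R}$. Since $N_q^H$ contains the non-degenerate interval $J=[q,1-q]$ and $\varphi$ is a distance-preserving continuous bijection, $\varphi(J)$ is a compact connected subset of $\mathbb{R}$ with $\mathrm{diam}(\varphi(J))=\mathrm{diam}(J)=1-2q$, hence an interval of that length contained in $E(x_n)$. Thus $\varphi|_J$ is an isometry between two intervals, which is necessarily affine; say $\varphi|_J(x)=\epsilon x+c$. For any $y\in N_q^H$ the two equations $|\varphi(y)-(\epsilon a+c)|=|y-a|$ with $a$ ranging over two distinct interior points of $J$ have a unique common solution, namely $\varphi(y)=\epsilon y+c$. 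Consequently $E(x_n)=\epsilon N_q^H+c$.

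Next I would eliminate negative terms by the standard identity $E(x_n)=E(|x_n|)-\sum_{x_n<0}|x_n|$, which follows from the substitution $\epsilon'_n=1-\epsilon_n$ whenever $x_n<0$. Combining this with invariance of achievement sets under permutations of the sequence, we may assume $(|x_n|)$ is non-increasing; absorbing the extra translation into $c$ gives $E(y_n)=\epsilon N_q^H+c'$ for some non-increasing non-negative $(y_n)\in\ell_1$ and some new constant $c'\in\mathbb{R}$.

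To conclude, I would read off $c'$ by comparing extreme points. Since $\min E(y_n)=\min N_q^H=0$ and $\max N_q^H=1$, the case $\epsilon=+1$ forces $c'=0$ and hence $E(y_n)=N_q^H$, contradicting Theorem \ref{mainResult}. In the case $\epsilon=-1$ one obtains $c'=1$ and so $E(y_n)=1-N_q^H$. The crucial remaining observation is that the construction of $N_q^H$ uses only the sets $U_n^q$ of gaps removed while building $C_q$, each of which is manifestly invariant under $x\mapsto 1-x$; hence $1-N_q^H=N_q^H$, so $E(y_n)=N_q^H$ and we contradict Theorem \ref{mainResult} a second time. The step that demands some care is the affine extension of the isometry, but this is routine because $N_q^H$ has non-empty interior; the only genuinely new ingredient beyond Theorem \ref{mainResult} is the symmetry $1-N_q^H=N_q^H$, which handles the orientation-reversing case essentially for free.
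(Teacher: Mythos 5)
Your proposal is correct and takes essentially the same route as the paper: the paper likewise observes that (by symmetry of $N_{q}^{H}$ under $x\mapsto 1-x$) any isometric copy is a translate $N_{q}^{H}+t$, identifies $t$ as the sum of the negative terms via $E(x_{n})=E(|x_{n}|)+\sum_{x_{n}<0}x_{n}$, and then invokes Theorem \ref{mainResult}. You merely spell out the affine-extension and orientation-reversing steps that the paper leaves implicit.
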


In fact, any set $E$ isometric to the symmetric set of the form $N_{q}^{H}$
is of the form $E=N_{q}^{H}+t$ for some $t\in \mathbb{R}$. If $E=E\left(
x_{n}\right) $ for an absolutely summable sequence $\left( x_{n}\right) $,
then $0\in E$ and hence $t\leq 0$. More precisely, $t$ is the sum of all
negative terms of $\left( x_{n}\right) $. Then (see for example \cite{BFPW}) 
$N_{q}^{H}=E\left( \left\vert x_{n}\right\vert \right) $ which gives the
contradiction with our theorem.

\begin{rem}
In the previous theorem it is sufficient to assume that $n-1\in H$, $n\notin
H$ and $n+1\in H$ for some $n$ (instead of the assumption $1\notin H$ and $%
2\in H$). In fact, let us consider the part $\widetilde{N}=\left[ 0,q^{n-1}%
\right] \cap N_{q}^{H}$ of the set $N_{q}^{H}$, with nonremoved middle
closed interval $J$ and with removed next intervals $G_{0}$ and $G_{1}$
lying on the left and right of $J$, respectively. To simplify the notation,
let us assume (by considering rescaling $q^{-n+1}N_{q}^{H}$ of $N_{q}^{H}$)
that the right end of $N$ is equal to one. Then $J=[q,1-q]$, $%
G_{0}=(q^{2},q-q^{2})$ and $G_{1}=(1-q+q^{2},1-q^{2})$. Because $n-1\in H$,
one is the left end of the gap which is the longest one from the left.
Hence, by the assumption that $N_{q}^{H}=E\left( x_{n}\right) $ and by Lemma %
\ref{lemat3}, we have $1=r_{z}$ for some $z\in \mathbb{N}$, and consequently 
$\widetilde{N}=E\left( x_{n}:n>z\right) $. So we can repeat the conclusion
that $F_{k}\cap \lbrack 1-q,1-q^{2})=\{1-q\}$ and $(1-q-(q-q^{2}),1-q)\cap
F_{k-1}=((1-q)^{2},1-q)\cap F_{k-1}=\emptyset $. The rest of the proof runs
as before.

It is also worth noting that if $q\leq 1/3$ then we do not need any
assumptions on $H$. Indeed, since $N_{q}^{H}$ is regular, there exists $n$
such that $n\notin H$ and $n+1\in H$. By the inequality $q\leq 1/3$ and the
assumption $N_{q}^{H}=E\left( x_{n}\right) $, we easily conclude that if $%
q-q^{2}=x_{k}$ then $k=p$ in the notation from the Second Gap Lemma (Lemma %
\ref{second_gap_lemma}). Therefore, by this Lemma, we have $1-q^{2}\in F_{k}$
and $1-q+q^{2}\in F_{k}+r_{k}$. Since $(1-q+q^{2})-(1-q)=q^{2}=r_{k}$, we
have $1-q\in F_{k}$. By $(1-q^{2})-(1-q)=q-q^{2}=x_{k}$ we conclude that in
fact $1-q\in F_{k-1}$. This way we get once again $F_{k}\cap \lbrack
1-q,1-q^{2})=\{1-q\}$ and $((1-q)^{2},1-q)\cap F_{k-1}=\emptyset $.
\end{rem}

\section{Attractors of IFS}

Let us recall that all known examples of sequences generating Cantorvals as
their achievement sets, are so called multigeometric sequences. We call a
sequence multigeometric if it is of the form 
\begin{equation*}
(k_{1},k_{2},\ldots ,k_{m},k_{1}q,k_{2}q,\ldots ,k_{m}q,k_{1}q^{2},\ldots )
\end{equation*}%
for some $k_{1},k_{2},\ldots ,k_{m},q\in \mathbb{R}$. Following \cite{BFS},
we denote such a sequence by $(k_{1},k_{2},\ldots ,k_{m};q)$. It was
observed in \cite{BBFS}, that for the multigeometric sequence $%
(a_{n})=(k_{1},k_{2},\ldots ,k_{m};q)$ its achievement set $E$ is the unique
compact set satisfying the equality $E=\Sigma +qE$ for $\Sigma
=\{\sum_{n=1}^{m}\varepsilon _{n}k_{n}:(\varepsilon _{n})\in \{0,1\}^{m}\}$.
It is equivalent to the fact that $E$ is an attractor of the IFS consisting
of functions $\{f_{\sigma }:\sigma \in \Sigma \}$ given by formulas $%
f_{\sigma }(x)=qx+\sigma $. On the other hand, even symmetric set $\Sigma $
does not need to be connected with some multigeometric sequence.

In this section we also need to briefly present the notion of IFS. Let $%
(X,d) $ be a complete metric space and denote by $K(X)$ space of all
nonempty compact subsets of $X$ endowed with the Hausdorff metric $H$: 
\begin{equation*}
H(A,B)=\max \left\{ \sup \left\{ \inf \{d(x,y):y\in A\}:x\in B\right\} ,\sup
\left\{ \inf \{d(x,y):y\in B\}:x\in A\right\} \right\} .
\end{equation*}%
It turns out that $(K(X),H)$ is complete if and only if $X$ is complete.

\begin{df}
If $f_{1},...,f_{n}:X\rightarrow X$ are functions with Lipschitz constants
strictly smaller than $1$, then the finite sequence $\mathcal{S}%
=(f_{1},...,f_{n})$ is called an iterated function system (IFS in short).%
\newline
If $\mathcal{S}=(f_{1},...,f_{n})$ is an IFS, then by $F_{\mathcal{S}}$ we
denote the mapping $F_{\mathcal{S}}:K(X)\rightarrow K(X)$ defined by: 
\begin{equation*}
F_{\mathcal{S}}(D)=f_{1}(D)\cup ...\cup f_{n}(D)\text{.}
\end{equation*}%
It turns out that $F_{\mathcal{S}}$ also has Lipschitz constant smaller than 
$1$ thus, by Banach Fixed Point Theorem, it has the unique fixed point $%
D_{0} $. We call it an attractor of an IFS $\mathcal{S}$.

This nice idea has been deeply considered in the last 30 years (see, a.e., 
\cite{H}, \cite{Ha}, \cite{B}).\newline
\end{df}

\begin{thm}
\label{IFSTheorem} For any $q\leq 1/3$ the set $N_{q}=N_{q}^{2\mathbb{N}}$
is the attractor of IFS consisting of $4+\lceil \frac{1}{q^{2}}\rceil $
affine functions given by the formulas 
\begin{equation*}
f_{\sigma }(x)=q^{2}x+\sigma
\end{equation*}%
for $\sigma $'s belonging to some finite set $\Sigma $. (The symbol $\lceil
\alpha \rceil $ denotes the smallest integer greater than or equal to $%
\alpha $.)
\end{thm}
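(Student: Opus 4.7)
The plan is to exhibit a concrete IFS $\mathcal{S}$ of $4+\lceil 1/q^{2}\rceil$ affine contractions $f_{\sigma}(x)=q^{2}x+\sigma$ and to verify directly that $F_{\mathcal{S}}(N_{q})=N_{q}$; uniqueness of the attractor of a contractive IFS then identifies $N_{q}$ as the attractor of $\mathcal{S}$. The starting observation is the two-step self-similar structure of $N_{q}$: since only even-indexed Cantor-like removals are performed, each of the four step-$2$ corner intervals $[0,q^{2}]$, $[q-q^{2},q]$, $[1-q,1-q+q^{2}]$ and $[1-q^{2},1]$ meets $N_{q}$ in a scaled translate $q^{2}N_{q}+\sigma$, while the entire middle interval $[q,1-q]$ lies inside $N_{q}$. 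I would therefore include in $\Sigma$ the four \emph{corner values} $\{0,\,q-q^{2},\,1-q,\,1-q^{2}\}$; the corresponding maps reproduce $N_{q}\cap([0,q]\cup[1-q,1])$ and give the decomposition $N_{q}=q^{2}N_{q}\cup((q-q^{2})+q^{2}N_{q})\cup[q,1-q]\cup((1-q)+q^{2}N_{q})\cup((1-q^{2})+q^{2}N_{q})$.

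The remaining task is to cover $[q,1-q]$ by $\lceil 1/q^{2}\rceil$ further translates $q^{2}N_{q}+\sigma_{i}$, each staying inside $N_{q}$. Every translate contains the ``long'' subinterval $[\sigma_{i}+q^{3},\sigma_{i}+q^{2}-q^{3}]$ of length $q^{2}(1-2q)$, and a length-count shows that $\lceil 1/q^{2}\rceil$ copies are exactly what one needs for the union of these long subintervals to span $[q,1-q]$. I would place the outermost middle maps at $\sigma_{0}=q-q^{3}$ and $\sigma_{k-1}=1-q-q^{2}+q^{3}$, so that the first long subinterval starts at $q$ and the last ends at $1-q$, and fit the remaining $\sigma_{i}$'s in between with a step $s$ such that consecutive long subintervals overlap, which amounts to $s\leq q^{2}-2q^{3}$.

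The hypothesis $q\leq 1/3$ enters through an overflow-absorption check. Each middle translate $q^{2}N_{q}+\sigma_{i}$ extends $q^{3}$ beyond its long subinterval on each side, and by the five-piece decomposition of $q^{2}N_{q}$ obtained by rescaling the displayed decomposition above, those overflows consist only of further translates of $q^{4}N_{q}$. For the interior overflows to be absorbed into the long subinterval of a neighbouring middle copy one needs the step $s$ to lie in the window $[q^{3},\,q^{2}-2q^{3}]$, which is non-empty precisely when $q\leq 1/3$. At the boundary, a direct calculation using the same decomposition shows that the overflow of $g_{0}$ inside $[q-q^{3},q]$ equals $f_{q-q^{2}}(N_{q})\cap[q-q^{3},q]$, so it is absorbed into the corner map $f_{q-q^{2}}$; the right end is symmetric.

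Combining these ingredients gives $F_{\mathcal{S}}(N_{q})\subseteq N_{q}$ (each image is either a corner piece, an interior subset of $[q,1-q]$, or a boundary overflow absorbed into a corner image) and $F_{\mathcal{S}}(N_{q})\supseteq N_{q}$ (corner images cover the step-$2$ extremes and the long subintervals of the middle images cover $[q,1-q]$), hence $F_{\mathcal{S}}(N_{q})=N_{q}$. Since $F_{\mathcal{S}}$ is a contraction on $(K(\mathbb{R}),H)$ of ratio $q^{2}<1$, Banach's fixed point theorem identifies $N_{q}$ as the unique attractor of $\mathcal{S}$. I expect the main technical obstacle to be the exact placement of the intermediate $\sigma_{i}$'s when $1/q^{2}$ is not an integer: a uniform step $s$ cannot always simultaneously satisfy the tiling constraint $s\leq q^{2}-2q^{3}$ and the absorption constraint $s\geq q^{3}$, so one must allow a slightly non-uniform distribution near the boundary to keep the total count at $\lceil 1/q^{2}\rceil$ while preserving both the coverage of $[q,1-q]$ and the set-wise matching of every middle copy's overflow with the portion of the relevant corner copy sitting in the same interval.
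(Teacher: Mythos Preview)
Your approach is essentially the same as the paper's: four ``corner'' maps with shifts $\{0,\,q-q^{2},\,1-q,\,1-q^{2}\}$ together with $\lceil 1/q^{2}\rceil$ ``middle'' maps whose central subintervals tile $J=[q,1-q]$, the two outermost middle maps being anchored so that $g(q)=q$ and $g(1-q)=1-q$ (your $\sigma_{0}=q-q^{3}$, $\sigma_{k-1}=1-q-q^{2}+q^{3}$), and the boundary overflow of these two outermost maps matched against the inner corner maps (your $g_{0}[L]=f_{q-q^{2}}[R]$ is exactly the paper's ``$g_{1}[L]$ is consistent with $f_{2}[R]$'').

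One small simplification: your worry in the last paragraph about fitting a uniform step into the window $[q^{3},\,q^{2}-2q^{3}]$ is unnecessary. Once the two outermost middle maps are fixed, the remaining $\lceil 1/q^{2}\rceil-2$ middle maps can be placed with $\sigma_{i}\in[q,\,1-q-q^{2}]$, so that each image $q^{2}N_{q}+\sigma_{i}$ lies entirely inside $J\subset N_{q}$ and no interior absorption check is needed at all; one only needs their central subintervals, each of length $q^{2}(1-2q)$, to cover the residual gap $[q+q^{2}-2q^{3},\,1-q-q^{2}+2q^{3}]$ of length $(1-2q)(1-2q^{2})$, and the count $\lceil 1/q^{2}\rceil-2\geq 1/q^{2}-2$ is exactly enough. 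The condition $q\leq 1/3$ is then used only once, to ensure that the second middle map (at $\sigma=q$) still overlaps the first one's central subinterval, i.e.\ $q+q^{3}\leq q+q^{2}-2q^{3}$. This is how the paper (tersely) proceeds, filling $J$ ``from the left and from the right'' without insisting on a uniform step.
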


\begin{proof}
For the sake of clarity let us start with the proof for $q=1/3$. Let $\Sigma
=\frac{1}{27}\{0,6,8,9,10,\ldots ,16,18,24\}$. It is enough to verify that $%
N_{1/3}=\Sigma +\frac{1}{9}N_{1/3}$. Instead of $N_{1/3}$ we can consider
the set $\tilde{N}=27N_{1/3}$ and $\widetilde{\Sigma }=27\Sigma $.

\includegraphics[height=4cm,width=16cm]{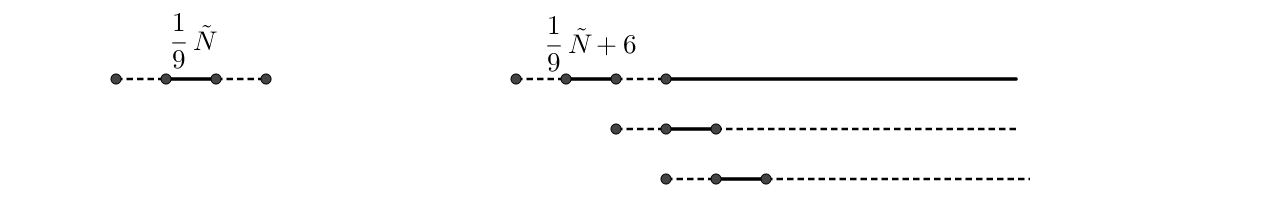}\newline
Figure 3

Observe that (see Figure 3):

\begin{itemize}
\item $[0,3] \cap \tilde{N} = \frac{1}{9} \tilde{N}$,

\item $[6,9] \cap \tilde{N} = \frac{1}{9} \tilde{N}+6$,

\item $[8,9]\cap \tilde{N}=(\frac{1}{9}\tilde{N}+8)\cap \lbrack 8,9]=(\frac{1%
}{9}\tilde{N}+6)\cap \lbrack 8,9]$,

\item The intervals $[9,10],\ [10,11],\ldots ,\ [17,18]$ are the middle
components of $\frac{1}{9}\tilde{N}+8,\frac{1}{9}\tilde{N}+9,\ldots ,\frac{1%
}{9}\tilde{N}+16$ respectively,

\item The situation between $18$ and $27$ is symmetric and analogous to the
one between $0$ and $9$.
\end{itemize}

Now assume that $q<1/3$. Then $N_{q}$ is the union of four translations of
the set $q^{2}N_{q}$ and of the middle interval $J$. Denote by $L$ the union
of two left copies of $q^{2}N_{q}$, and by $R$ the union of two right ones.

The images of the set $N_{q}$ with respect to the functions%
\begin{equation*}
f_{1}(x)=q^{2}x\text{ \ and \ }f_{2}(x)=q^{2}x+q-q^{2}
\end{equation*}%
cover the set $L$, and the images of the set $N_{q}$ with respect to the
functions 
\begin{equation*}
f_{3}(x)=q^{2}x+1-q\text{ \ and \ }f_{4}(x)=q^{2}x+1-q^{2}
\end{equation*}%
cover the set $R$. Now we want to cover the interval $J$ by the images of $J$
with respect to the functions $g_{i}$ given by the formulas $%
g_{i}(x)=q^{2}x+\sigma _{i}$, for $i=1,2,...,m$.

\includegraphics[height=3cm,width=16cm]{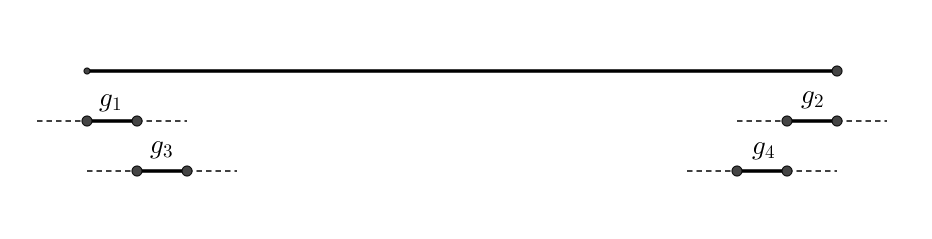}\newline
Figure 4

We choose $\sigma _{1}$ such that $g_{1}\left( q\right) =q$, and $\sigma
_{2} $ such that $g_{2}\left( 1-q\right) =1-q$. It is worth noting that $%
g_{1}\left[ L\right] $ is consistent with $f_{2}\left[ R\right] $, and $g_{2}%
\left[ R\right] $ is consistent with $f_{3}\left[ L\right] $. The images of $%
q^{2}N_{q}$ with respect to next functions cover $J$ one by one from the
left and from the right. Note that the "middle" two subintervals can be
overlapping.
\end{proof}

\begin{rem}
In \cite{BBFS} the authors have analysed the attractor of IFS for the given
set $\Sigma $ with respect to the ratio $q$. Making a simple use of their
results we easily conclude that for $\Sigma $ described in the proof of
Theorem \ref{IFSTheorem} for the Cantorval $N_{1/3}$\ , the attractor $%
E(\Sigma ,q)$ is

\begin{itemize}
\item an interval for $q \geq \frac{1}{5}$,

\item a Cantorval for $\frac{1}{9}\leq q < \frac{1}{5}$,

\item the set with positive Lebesgue measure for almost all $q\in (\frac{1}{%
13},\frac{1}{9})$. Unfortunately, we can not say anything on the interior of 
$E(\Sigma ,q)$ in this case,

\item measure zero set for some sequence $(q_{n})$ decreasing to $\frac{1}{13%
}$,

\item a nowhere dense set for $q\leq \frac{1}{13}$.
\end{itemize}
\end{rem}

For $q>1/3$ the situation is more complicated.

\begin{thm}
Let $q\in \left( 1/3,1/2\right) $. Then the set $N_{q}$ is the attractor of
IFS consisting of affine functions with three different ratios.

\begin{proof}
We start from the four functions $f_{i}$ exactly as in the proof of the
previous theorem. Now we choose a positive integer $n$ satisfying the
inequality $q^{2n}\left( 1-q\right) \leq 1-2q$. Then some translation of $%
q^{2n}\left( J\cup R\right) $ is contained in $J$. We choose $\sigma _{1}$
and $\sigma _{2}$ such that $g_{i}\left( x\right) =q^{2n}x+\sigma _{i}$, $%
g_{1}\left[ J\right] $ covers the left and $g_{2}\left[ J\right] $ covers
the right subinterval of $J$. By the self-similarity of the set $N_{q}$ we
observe that $g_{1}\left[ L\right] $ is consistent with $f_{2}\left[ R\right]
$, and $g_{2}\left[ R\right] $ is consistent with $f_{3}\left[ L\right] $.

Finally, let $k$ be an integer satisfying the inequalities

\begin{itemize}
\item $q^{2k}\cdot q\leq q^{2n}\left( 1-2q\right) ,$

\item $q^{2k}\left( 1-2q\right) \leq \left( 1-2q\right) -2\left(
q^{2n}\left( 1-2q\right) \right) $.
\end{itemize}

Then we can cover the middle part of $J$ by the images of $J$ with respect
to several functions $h_{i}$ of the form $h_{i}\left( x\right)
=q^{2k}x+\theta _{i}$, similarly as in the previous proof.
\end{proof}
\end{thm}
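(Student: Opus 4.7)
The plan is to extend the construction of Theorem \ref{IFSTheorem} (the case $q \leq 1/3$) by inserting affine maps at two additional contraction ratios beyond the base ratio $q^2$. The self-similar decomposition
\[
N_q = q^2 N_q \;\cup\; ((q-q^2) + q^2 N_q) \;\cup\; J \;\cup\; ((1-q) + q^2 N_q) \;\cup\; ((1-q^2) + q^2 N_q),
\]
valid because $H = 2\mathbb{N}$ makes $N_q$ invariant under two-level scale shifts, again yields the four maps $f_1, f_2, f_3, f_4$ of ratio $q^2$ that cover the corner pieces $L \cup R$. All remaining work is to cover the middle interval $J = [q, 1-q]$, which for $q > 1/3$ cannot be done with ratio $q^2$ alone.

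I would first choose $n$ to be the smallest positive integer with $q^{2n}(1-q) \leq 1-2q$, so that a copy of $N_q$ scaled by $q^{2n}$ and placed with its $J$-part flush against the left endpoint of $J$ has right end at most $1-q$. Setting $g_1(x) = q^{2n} x + (q - q^{2n+1})$ (so that $g_1(q) = q$) and, symmetrically, $g_2(x) = q^{2n} x + (1-q)(1 - q^{2n})$ (so that $g_2(1-q) = 1-q$), the intervals $g_1[J]$ and $g_2[J]$, each of length $q^{2n}(1-2q)$, sit flush against the left and right endpoints of $J$. The overhangs $g_1[L]$ and $g_2[R]$ must land inside $f_2[N_q]$ and $f_3[N_q]$: I would check by induction on the level that the two rightmost $q^{2m}$-scaled canonical copies of $N_q$ inside $N_q$ are at positions $1 - q^{2m}$ and $1 - q^{2m-1}$, which forces $g_1[L]$ to coincide with the two rightmost $q^{2n+2}$-copies within $f_2[N_q]$.

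Next I would cover the central gap of $J$ still uncovered by $g_1[J] \cup g_2[J]$, an interval of length $(1-2q)(1-2q^{2n})$. Take $k$ satisfying $q^{2k+1} \leq q^{2n}(1-2q)$ and $q^{2k}(1-2q) \leq (1-2q)(1-2q^{2n})$, and place finitely many maps $h_i(x) = q^{2k} x + \theta_i$ along this gap, exactly as in the tiling argument from the case $q \leq 1/3$ of Theorem \ref{IFSTheorem}. The first inequality guarantees that corner overhangs of successive $h_i$-copies fit inside adjacent central intervals; the second lets the central intervals $h_i[J]$ tile the gap. The resulting IFS $\{f_1, \ldots, f_4, g_1, g_2, h_1, \ldots, h_m\}$ has three distinct contraction ratios $q^2, q^{2n}, q^{2k}$.

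The final step is to verify $\bigcup_i f_i[N_q] \cup g_1[N_q] \cup g_2[N_q] \cup \bigcup_i h_i[N_q] = N_q$, after which Banach's fixed-point theorem on $(K(\mathbb{R}), H)$ identifies $N_q$ as the attractor. The main obstacle, as I see it, is this alignment/consistency verification: one must check that all the overhang pieces $g_i[L \cup R]$ and $h_i[L \cup R]$ lie inside copies of $N_q$ already placed by other maps in the IFS, so that no spurious points appear outside $N_q$. This reduces to position calculations at each of the three scales $q^2, q^{2n}, q^{2k}$, invoking repeatedly the fact that $N_q$ reproduces itself under each two-level contraction.
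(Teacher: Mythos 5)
Your proposal is correct and follows essentially the same route as the paper: the same four ratio-$q^2$ maps for $L\cup R$, the same choice of $n$ with $q^{2n}(1-q)\leq 1-2q$ for the two flush copies $g_1,g_2$ at the ends of $J$, and the same pair of inequalities for $k$ (note $(1-2q)-2q^{2n}(1-2q)=(1-2q)(1-2q^{2n})$ and $q^{2k}\cdot q=q^{2k+1}$, so your conditions are literally the paper's). You merely spell out the alignment checks that the paper compresses into ``by the self-similarity of $N_q$.''
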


\begin{problem}
In the last theorem we have used three ratios. It is the natural question if
it is possible to use only one ratio. Does there exist a set $N_{q}$ with $%
q>1/3$ which is the attractor of IFS consisting of affine functions with the
same ratios?
\end{problem}

\end{document}